\newcommand{\subjclass}[2][2020]{%
  \let\@oldtitle\@title%
  \gdef\@title{\@oldtitle\footnotetext{#1 \emph{Mathematics subject classification.} #2}}%
}
\newcommand{\keywords}[1]{%
  \let\@@oldtitle\@title%
  \gdef\@title{\@@oldtitle\footnotetext{\emph{Key words and phrases.} #1}}%
}
\let\le\leqslant
\let\ge\geqslant
\let\eps\varepsilon
\let\p\partial
\renewcommand{\d}{\mathrm{d}}
\renewcommand\pmod[1]{\;(\operatorname{mod}#1)}
\newtheorem{theorem}{Theorem}[section]
\newtheorem*{theorem*}{Theorem}
\newtheorem{lemma}[theorem]{Lemma}
\theoremstyle{remark}
\newtheorem{definition}[theorem]{\bf Definition}
\newtheorem{remark}[theorem]{\bf Remark}
\newtheorem*{acknowledgements}{\bf Acknowledgements}
\numberwithin{equation}{section}
\begin{document}
\title{A note on the irrationality of $\zeta_2(5)$}

\author{Li Lai, Johannes Sprang, Wadim Zudilin}
\date{}
\subjclass[2020]{11J72 (primary), 11J82, 11M06, 33C20 (secondary).}
\keywords{Irrationality, irrationality measure, zeta value, hypergeometric series.}

\maketitle

\begin{abstract}
In a spirit of Ap\'ery's proof of the irrationality of $\zeta(3)$,
we construct a sequence $p_n/q_n$ of rational approximations to the $2$-adic zeta value $\zeta_2(5)$ which satisfy $0 <|q_n\zeta_2(5)-p_n|_2 < \max\{ |p_n|,|q_n| \}^{-1-\delta}$ for an explicit constant $\delta>0$.
This leads to a new proof of the irrationality of $\zeta_2(5)$, the result established recently by Calegari, Dimitrov and Tang using a different method.
Furthermore, our approximations allow us to obtain an upper bound for the irrationality measure of this $2$-adic quantity; namely, we show that $\mu(\zeta_2(5)) \le (16\log 2)/(8 \log 2 -5) = 20.342\dots$\,.
\end{abstract}

\section{Introduction}

Ap\'ery's proof \cite{Ape1979} of the irrationality of
\[
\zeta(3)=\sum_{k=1}^\infty\frac1{k^3}
\]
remains a significant attraction in number theory.
One reason for this is that the rational approximations to $\zeta(3)$ used in the proof link with many other mathematics areas\,---\,combinatorics, analysis, algebraic geometry, differential equations, integrable models, mathematical physics\,---\,to name a few.
Another reason behind this attractiveness is a recognised difficulty of proving anything beyond $\zeta(3)\notin\mathbb Q$ for the values of Riemann's zeta function and its numerous generalisations that does not follow from known transcendence results for $\pi$ and the logarithms of algebraic numbers.
Some partial linear independence results for odd zeta values by Ball and Rivoal \cite{BR2001,Riv2000} and by the third author \cite{Zud2001} consolidated the use of hypergeometric functions in constructing rational approximations to $L$-values,
while the recent irrationality advancement \cite{CDT2024+} of Calegari, Dimitrov and Tang sets up promising perspectives for this field.
Their proof of $L(2,\chi_{-3})\notin\mathbb Q$ in \cite{CDT2024+}, the irrationality for the $L$-value attached to the Dirichlet modulo~3 character, extends existing arithmetic techniques to new horizons; at the same time it builds on essential ingredients of Ap\'ery's proof and its later re-interpretations.
The use in \cite{CDT2024+} of novel methodology of the so-called holonomicity criteria leads to further arithmetic advances including the proof of the unbounded denominators conjecture \cite{CDT2025} and of the irrationality for products of two logarithms; it also allows Calegari, Dimitrov and Tang \cite{CDT2020+,CDT2025+} to demonstrate the irrationality of $\zeta_2(5)$, the $2$-adic analogue of $\zeta(5)$,\,---\,a result whose Archimedean counterpart is non-existent. 

Our principal goal in this note is to give a new proof of the irrationality of $\zeta_2(5)$, much in a spirit of Ap\'ery's original proof for $\zeta(3)$, via an explicit construction of rational approximations to the 2-adic zeta value. These approximations allow us to also estimate the quality of general rational approximations to the number, that is, to give an upper bound on the irrationality measure $\mu(\zeta_2(5))$\,---\,below we carefully define all the objects involved.
A remarkable feature of our construction is that our approximations satisfy a three-term Ap\'ery-like recurrence relation
\begin{equation}
(n+1)^5\rho_{n+1} - 32(2n+1)(8n^4+16n^3+20n^2+12n+3)\rho_n + 2^{16}n^5\rho_{n-1} = 0
\label{eq:rec}
\end{equation}
for $n=1,2,\dots$\,. Apparently, this recursion did not show up in the literature before; it admits a solution $\{\rho_0,\rho_1,\rho_2,\dots\}=\{1,96,14944,\dots\}$ which is integer-valued\,---\,we give an explicit binomial expression for the latter.
Notice that we do not expect the existence of a three-term Ap\'ery-like recursion for the Archimedean zeta value $\zeta(5)$, though four-term recursions of this type are known~\cite{BZ2022+}.

To summarise, our main result is as follows.

\begin{theorem}\label{main}
The $2$-adic zeta value $\zeta_2(5)$ is irrational. Moreover, we have the following upper bound for its irrationality measure: 
\[  
\mu(\zeta_2(5)) \le \frac{16\log 2}{8\log 2 - 5} = 20.342651\dotsc. 
\]
\end{theorem}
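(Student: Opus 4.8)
The plan is to realise $\zeta_2(5)$ as the $2$-adic limit of $p_n/q_n$ for two explicit solutions $(q_n),(p_n)$ of the recurrence~\eqref{eq:rec}, and then to extract the three asymptotic rates that govern the argument: the Archimedean growth of $q_n,p_n$, the size of a common denominator that clears them into $\mathbb Z$, and the $2$-adic decay of the linear forms $r_n:=q_n\zeta_2(5)-p_n$. I would take $q_n:=\rho_n$ (the integer solution, specified by its binomial closed form) and construct a companion $p_n$ by an analogous hypergeometric/binomial formula, together with an accompanying expression for $r_n$ as a $2$-adically convergent series; that $q_n$ and $p_n$ both satisfy~\eqref{eq:rec} is then a creative-telescoping (Zeilberger) verification, and $p_n/q_n\to\zeta_2(5)$ follows once $r_n\to0$ is established. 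Since the characteristic polynomial of~\eqref{eq:rec} is $(\lambda-256)^2$ — its constant term $2^{16}$ being exactly the square of half the middle coefficient — every solution has Archimedean size $256^{\,n+o(n)}$, so from the closed forms (saddle point, or Poincar\'e--Perron for the upper bound) one gets $\log|q_n|,\log|p_n|\le 8n\log 2+o(n)$. One also checks that $D_np_n\in\mathbb Z$ for $D_n:=\operatorname{lcm}(1,\dots,n)^5$, where $\log D_n\le 5n+o(n)$ by the prime number theorem and $v_2(D_n)=O(\log n)$; then $P_n:=D_np_n$ and $Q_n:=D_nq_n$ are integers with $\log\max(|P_n|,|Q_n|)\le(8\log 2+5)\,n+o(n)$.

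The decisive input is the $2$-adic estimate $v_2(r_n)=16\,n+O(\log n)$; in particular $0<|r_n|_2\le 2^{-16n+o(n)}$. The lower bound $v_2(r_n)\ge 16n-O(\log n)$ — the $2$-adic shadow of the exponential smallness of Ap\'ery's remainder — is the crux, and it must come from the closed form for $r_n$, whose general term carries a factor $2^{16n}$ up to a quantity of $2$-adic valuation $O(\log n)$ contributed by the $\operatorname{lcm}$. A naive term-by-term induction on~\eqref{eq:rec} cannot produce it, since there the coefficient of $r_n$ has $2$-adic valuation $5-5v_2(n+1)$, which is very negative for many $n$; so the two terms on the right of~\eqref{eq:rec} must cancel to order $2^{16n}$, and producing that cancellation is precisely the arithmetic miracle the construction is designed for. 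The matching upper bound for $v_2(r_n)$ and the non-vanishing $r_n\neq0$ then come essentially for free from the Casoratian $W_n:=q_nr_{n+1}-q_{n+1}r_n$: from~\eqref{eq:rec} one computes $W_n=(q_2p_1-q_1p_2)\prod_{k=2}^{n}\tfrac{2^{16}k^5}{(k+1)^5}$, so $W_n\neq0$ for all $n$ (the prefactor $q_2p_1-q_1p_2$ is an explicit nonzero rational, not involving $\zeta_2(5)$) and $v_2(W_n)=16n+O(\log n)$; since $q_n=\rho_n$ has bounded $2$-adic valuation — in fact $v_2(\rho_n)=5$ for $n\ge1$, which one sees from the normalisation $\rho_n=2^{8n}\sigma_n$ that turns~\eqref{eq:rec} into a recurrence with leading root $1$ — comparison of valuations in $W_n$ pins $v_2(r_n)$ down to $16n+O(\log n)$ and shows the zeros of $(r_n)$ are isolated.

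With these rates in hand, one runs the standard passage from good rational approximations to an irrationality measure, in its $2$-adic form. For irrationality: if $\zeta_2(5)=p/q$, then $Q_np-P_nq=D_nq\,r_n$ is a nonzero integer, so for large $n$
\[
|Q_np-P_nq|^{-1}\;\le\;|Q_np-P_nq|_2\;=\;|D_nq\,r_n|_2\;\le\;|r_n|_2\;\le\;2^{-16n+o(n)},
\]
while $|Q_np-P_nq|\le 2\max(|P_n|,|Q_n|)\max(|p|,|q|)\le e^{(8\log 2+5)n+o(n)}$ as $p/q$ is fixed; combining gives $16n\log 2\le(8\log 2+5)\,n+o(n)$, i.e.\ $(8\log 2-5)\,n\le o(n)$, which is absurd since $8\log 2>5$. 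For the measure, given a rational $p/q$ of height $H=\max(|p|,|q|)$, I would choose $n$ minimal with $v_2(Q_n\zeta_2(5)-P_n)\ge v_2(q\zeta_2(5)-p)$, so $n\asymp v_2(q\zeta_2(5)-p)/16$, and combine the upper bound $|Q_np-P_nq|_2\le|q\zeta_2(5)-p|_2$ — from $Q_np-P_nq=Q_n(p-q\zeta_2(5))+q(Q_n\zeta_2(5)-P_n)$ together with $|Q_n|_2,|q|_2\le1$ — with the lower bound $|Q_np-P_nq|_2\ge\bigl(2\,e^{(8\log 2+5)n+o(n)}H\bigr)^{-1}$ to obtain $|\zeta_2(5)-p/q|_2\ge|q\zeta_2(5)-p|_2\ge H^{-16\log 2/(8\log 2-5)+o(1)}$; the finitely many $p/q$ actually equal to some $p_n/q_n$ are dispatched by hand. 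This yields $\mu(\zeta_2(5))\le\frac{16\log 2}{8\log 2-5}$. The main obstacle is unmistakably the $2$-adic vanishing order of $r_n$: the remaining ingredients are either formal consequences of~\eqref{eq:rec} or routine asymptotics, whereas the divisibility $v_2(r_n)\gtrsim 16n$ carries the real arithmetic content and is exactly what makes the construction work.
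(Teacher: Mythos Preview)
Your plan correctly identifies the overall architecture and names the crux (the $2$-adic decay of $r_n$), but it is a sketch rather than a proof, with two genuine gaps. First, you never actually construct $p_n$ or $r_n$: saying you would produce them ``by an analogous hypergeometric/binomial formula, together with an accompanying expression for $r_n$ as a $2$-adically convergent series'' is not a construction, and without one there is nothing to which to apply a $2$-adic analysis. The paper's route here is specific: the linear form is defined as a Volkenborn integral $S_n=-\int_{\mathbb Z_2}R_n'(t+\tfrac12)\,\d t$ of the explicit rational function $R_n(t)=2^{8n}(2t+n)(t+\tfrac12)_n^4/(t)_{n+1}^4$, so that $S_n=\rho_{n,0}+\rho_{n,3}\zeta_2(5)$ drops out of partial fractions and the identity $\frac{1}{s-1}\int_{\mathbb Z_2}(t+\tfrac12)^{1-s}\,\d t=2^s\zeta_2(s)$. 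This integral representation is precisely what makes the $2$-adic estimate go: one rewrites $R_n'(t+\tfrac12)$ as $2^{12n+4}n!^4$ times a $\mathbb Z_2$-combination of products of binomial polynomials $\binom{t+j}{m}$ with a unit power series, and then the $\triangle$-operator calculus (Lemmas~\ref{lem_estimation_of_Volkenborn_integral} and~\ref{lem_properties_of_the_triangle_operator}) yields $v_2(S_n)\ge 16n+O(\log n)$ directly. Your remark that the bound ``must come from the closed form for $r_n$, whose general term carries a factor $2^{16n}$'' is exactly right in spirit, but you supply neither the closed form nor the mechanism, and you yourself observe that the recurrence alone cannot manufacture this cancellation.

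Second, you take for granted that $D_np_n\in\mathbb Z$ with $D_n=\operatorname{lcm}(1,\dots,n)^5$. In the paper this is the \emph{open} expectation~\eqref{den-con}; what is actually proved (Lemmas~\ref{lem_arith_rho}, \ref{lem-p-est}, \ref{almost_GDC}) is only the asymptotically equivalent inclusion $\Phi_n^{-1}d_n^6\rho_{n,0}\in\mathbb Z$, and even that costs real work: an Andrews-type transformation of the very-well-poised series into a quadruple sum, followed by a prime-by-prime case analysis of central binomial coefficients. The naive partial-fraction bound gives only $d_n^6\rho_{n,0}\in\mathbb Z$, which would produce $\beta=8\log 2+6>16\log 2=\alpha$ and kill the argument. (Your Casoratian bookkeeping and the Poincar\'e growth bound are fine; but the aside ``$v_2(\rho_n)=5$, which one sees from the normalisation $\rho_n=2^{8n}\sigma_n$'' is not an argument, since that substitution does not make $\sigma_n$ $2$-integral.)
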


\section{Preliminaries}

\subsection{Irrationality measure}

We first recall the definition of irrationality measure for real numbers.

\begin{definition}
Let $\xi \in \mathbb{R}$. 
Define the \emph{irrationality measure} of $\xi$\,---\,denoted by $\mu(\xi)$\,---\,to be the supremum of the set of real numbers $\mu$ such that 
\[
0< \left| \xi - \frac{A}{B} \right| < \frac{1}{\max\{ |A|, |B| \}^{\mu}}
\]
is satisfied by infinitely many pairs $(A,B) \in \mathbb{Z} \times \mathbb{Z}_{>0}$. 
Note that the supremum can be infinite; in such a case the number $\xi$ is called a Liouville number.
\end{definition}

A classical application of the Borel--Cantelli lemma shows that $\mu(\xi) = 2$ for almost all real numbers $\xi$ in the sense of the Lebesgue measure. 
A deep result of Roth \cite{Rot1955} states that $\mu(\xi)=2$ for any irrational algebraic real number $\xi$. 
It is proved by Euler that $\mu(e)=2$. 
The exact values of $\mu(\pi)$ and $\mu(\zeta(3))$ are also expected to be~2, but we only know that 
\[ 
\mu(\pi) \le 7.103205\dots
\]
from Zeilberger and the third author \cite{ZZ2020}, and
\[
\mu(\zeta(3)) \le 5.513890\dots
\]
from Rhin and Viola \cite{RV2001}.

A similar definition is available for the irrationality measure of a $p$-adic number.

\begin{definition}
Let $p$ be a prime and $\xi \in \mathbb{Q}_p$. 
Define the \emph{irrationality measure} of $\xi$\,---\,again denoted by $\mu(\xi)$\,---\,to be the supremum of the set of real numbers $\mu$ such that 
\[
0< \left| \xi - \frac{A}{B} \right|_p < \frac{1}{\max\{ |A|, |B| \}^{\mu}}
\]
is satisfied by infinitely many pairs $(A,B) \in \mathbb{Z} \times \mathbb{Z}_{>0}$.
\end{definition}

Although not explicitly stated in the literature, the upper bounds
\[
\mu(\zeta_2(3)) \le \frac{12\log 2}{6 \log 2 - 3} = 7.177398\dots
\]
and
\[
\mu(\zeta_3(3)) \le \frac{6\log 3}{3\log 3 -3} = 22.281447\dots
\]
can be extracted from Calegari's work \cite{Cal2005} with the help of a lemma of Bel \cite{Bel2019} stated below, alternatively, from Beukers' result \cite[Theorem 11.2]{Beu2008}.

\begin{lemma}[{Bel \cite[Lemme 3.2]{Bel2019}}]
\label{lem_Bel}
Let $p$ be a prime, $\xi \in \mathbb{Q}_p$ and $\alpha, \beta$ two real numbers satisfying $\alpha > \beta > 0$. 
Suppose that there exist two sequences $(a_n)_{n \ge 1} \subset \mathbb{Z}$ and $(b_n)_{n \ge 1} \subset \mathbb{Z}$ such that
\begin{itemize}
	\item   $|a_n + b_n \xi|_p \le \exp(-\alpha n +o(n))$ as $n \to \infty$\textup;
	\item   $\max\{ |a_n|, |b_n| \} \le \exp(\beta n + o(n))$ as $n \to \infty$\textup;
	\item  $a_nb_{n+1} - a_{n+1}b_n \ne 0$ for any $n \in \mathbb{Z}_{>0}$.
\end{itemize}
Then the $p$-adic number $\xi$ is irrational, with the following estimate for its irrationality measure:
\[ 
\mu(\xi) \le \frac{\alpha}{\alpha - \beta}. 
\]
\end{lemma}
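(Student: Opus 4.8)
The plan is to derive both conclusions from a single estimate built on the elementary $p$-adic inequality $|m|_p \ge 1/|m|$, valid for every nonzero integer $m$ (since $p^{v_p(m)}$ divides $m$, so $p^{v_p(m)} \le |m|$). Fix a pair $(A,B) \in \mathbb{Z}\times\mathbb{Z}_{>0}$ and, for each $n$, introduce the integer $M_n := a_n B + b_n A$ together with the $p$-adic linear form $L_n := a_n + b_n\xi$. The key is the identity
\[ M_n = B\,L_n - b_n B\Bigl(\xi - \frac{A}{B}\Bigr), \]
which, via the ultrametric inequality and $|B|_p, |b_n|_p \le 1$, yields $|M_n|_p \le \max\{|L_n|_p,\, |\xi - A/B|_p\}$. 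On the other hand, whenever $M_n \ne 0$ the Liouville inequality combined with the size bound $|M_n| \le (|A|+|B|)\max\{|a_n|,|b_n|\} \le 2\max\{|A|,|B|\}\exp(\beta n + o(n))$ gives $|M_n|_p \ge \tfrac{1}{2}\max\{|A|,|B|\}^{-1}\exp(-\beta n + o(n))$. Comparing these two bounds is the heart of the argument. The non-vanishing hypothesis enters through the observation that $M_n = M_{n+1} = 0$ would force $(B,A)$ to be a nonzero null vector of the matrix with rows $(a_n,b_n)$ and $(a_{n+1},b_{n+1})$, contradicting $a_nb_{n+1}-a_{n+1}b_n \ne 0$; hence at most one of any two consecutive $M_n$ can vanish.

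For irrationality I would argue by contradiction, taking $\xi = A/B$ with $B > 0$. Then the second term of the identity disappears and $|M_n|_p \le |L_n|_p \le \exp(-\alpha n + o(n))$. Since consecutive $M_n$ cannot both vanish, $M_n \ne 0$ for infinitely many $n$; for each such $n$ the two bounds on $|M_n|_p$ combine to give $\exp((\alpha-\beta)n + o(n)) \le 2\max\{|A|,|B|\}$, which is absurd as $n\to\infty$ because $\alpha > \beta$.

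For the irrationality measure, let $A/B \ne \xi$ with $B > 0$, write $\delta := |\xi - A/B|_p > 0$ and $H := \max\{|A|,|B|\}$, and choose $n$ to be the least index for which $\exp(-\alpha n + o(n)) \le \delta$; this forces $n = \tfrac{1}{\alpha}\log(1/\delta)\,(1+o(1))$ and $|L_n|_p \le \delta$. Replacing $n$ by $n+1$ if necessary (which preserves $|L_n|_p \le \delta$) I may assume $M_n \ne 0$. Now $\max\{|L_n|_p,\delta\} = \delta$, so the core comparison reads
\[ \delta \ge \tfrac{1}{2} H^{-1}\exp(-\beta n + o(n)) = \tfrac{1}{2} H^{-1}\delta^{\beta/\alpha + o(1)}, \]
and solving for $\delta$ (using $1 - \beta/\alpha = (\alpha-\beta)/\alpha > 0$) gives $\delta \ge H^{-\alpha/(\alpha-\beta)-o(1)}$. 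Since any $\mu > \alpha/(\alpha-\beta)$ would require infinitely many approximations with $0 < \delta < H^{-\mu}$, and hence with $H \to \infty$, the bound just obtained rules this out, giving exactly $\mu(\xi) \le \alpha/(\alpha-\beta)$.

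The main obstacle is not any single step but the uniform bookkeeping of the $o(n)$ error terms coming from the hypotheses: one must select the index $n$ as a function of $\delta$ (equivalently of $H$) so that these errors are swept into the final $o(1)$ in the exponent without contaminating the main term $\alpha/(\alpha-\beta)$. A secondary point requiring care is the passage from $n$ to $n+1$ needed to guarantee $M_n \ne 0$, which one must check leaves all the asymptotics intact.
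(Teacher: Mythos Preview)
The paper does not supply a proof of this lemma at all; it is quoted verbatim from Bel and used as a black box. Your argument is the standard one for such $p$-adic irrationality criteria and is correct in substance: form the integer $M_n=a_nB+b_nA$, bound $|M_n|_p$ from above via the ultrametric identity $M_n=BL_n-b_nB(\xi-A/B)$ and from below via $|M_n|_p\ge 1/|M_n|$ when $M_n\ne0$, and use the determinant hypothesis to ensure at least one of $M_n,M_{n+1}$ is nonzero.

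The only point that deserves a cleaner formulation is the one you already flag. Rather than speaking of ``the least $n$ for which $\exp(-\alpha n+o(n))\le\delta$'' (which is ill-defined and makes the shift $n\mapsto n+1$ awkward to justify), fix $\eta>0$, pick $N_\eta$ so that $|L_m|_p\le e^{-(\alpha-\eta)m}$ and $\max\{|a_m|,|b_m|\}\le e^{(\beta+\eta)m}$ for all $m\ge N_\eta$, and set $n=\max\bigl(N_\eta,\lceil\log(1/\delta)/(\alpha-\eta)\rceil\bigr)$. Then both $|L_n|_p\le\delta$ and $|L_{n+1}|_p\le\delta$ hold automatically, the replacement $n\mapsto n+1$ is harmless, and the comparison yields $\delta\ge c_\eta\,H^{-(\alpha-\eta)/(\alpha-\beta-2\eta)}$ for a constant $c_\eta>0$; letting $\eta\to0$ gives $\mu(\xi)\le\alpha/(\alpha-\beta)$.
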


Bel's lemma is a tool from our arsenal for proving Theorem~\ref{main}.

\subsection{Volkenborn integrals}

Let $p$ be a prime number. 
In this subsection, we will recall the definition of Volkenborn integral \cite{Vol1972} and its basic properties.

A function $f\colon \mathbb{Z}_p \to \mathbb{Q}_p$ is said to be \emph{Volkenborn integrable} if the sequence
\[
\frac{1}{p^n} \sum_{k=0}^{p^n-1} f(k)
\]
converges $p$-adically as $n \to \infty$. 
In this case, the value
\[ 
\int_{\mathbb{Z}_p} f(t)\,\d t := \lim_{n \to \infty} \frac{1}{p^n} \sum_{k=0}^{p^n-1} f(k) \in \mathbb{Q}_p 
\]
is called the \emph{Volkenborn integral} of $f$. 

Let $K$ be either $\mathbb{Q}_p$ or $\mathbb{Z}_p$. 
A function $f\colon\mathbb{Z}_p \to K$ is said to be \emph{strictly differentiable} on $\mathbb{Z}_p$\,---\,denoted by $f \in S^{1}(\mathbb{Z}_p,K)$\,---\,if 
\[
f(x) - f(y) = (x-y) g(x,y)
\]
for some continuous function $g(x,y)$ on $\mathbb{Z}_p \times \mathbb{Z}_p$.
It is known that every $f \in S^{1}(\mathbb{Z}_p,\mathbb{Q}_p)$ is Volkenborn integrable (see \cite[p.~264]{Rob2000}).
For our purposes, we note that if a rational function $f(t) \in \mathbb{Q}_p(t)$ has no pole in $\mathbb{Z}_p$, then $f \in S^{1}(\mathbb{Z}_p,\mathbb{Q}_p)$.

The Volkenborn integral has the following behaviour under translations.

\begin{lemma}[{\cite[Proposition 2, p.~265]{Rob2000}}]
\label{lem_translation_formula}
Let $f \in S^{1}(\mathbb{Z}_p,\mathbb{Q}_p)$. 
Then, for any $k \in \mathbb{Z}_{>0}$, we have
\[ 
\int_{\mathbb{Z}_p} f(t+k)\,\d t = \int_{\mathbb{Z}_p} f(t)\,\d t + \sum_{\ell=0}^{k-1} f'(\ell). 
\]
\end{lemma}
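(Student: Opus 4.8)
The plan is to reduce everything to the single-step shift $k=1$ and then telescope. First I would write out the two defining limits
\[
\int_{\mathbb{Z}_p} f(t+1)\,\d t = \lim_{n\to\infty} \frac{1}{p^n}\sum_{j=0}^{p^n-1} f(j+1), \qquad \int_{\mathbb{Z}_p} f(t)\,\d t = \lim_{n\to\infty}\frac{1}{p^n}\sum_{j=0}^{p^n-1} f(j),
\]
both of which exist since $f$ and its unit shift lie in $S^1(\mathbb{Z}_p,\mathbb{Q}_p)$ and are therefore Volkenborn integrable. Subtracting term by term, the two partial averages differ only at the endpoints, so the $n$-th difference collapses to a single boundary term,
\[
\frac{1}{p^n}\sum_{j=0}^{p^n-1} f(j+1) - \frac{1}{p^n}\sum_{j=0}^{p^n-1} f(j) = \frac{f(p^n)-f(0)}{p^n}.
\]

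The crux of the argument\,---\,and the step I expect to be the main obstacle\,---\,is to show that this boundary term converges to $f'(0)$. Here strict differentiability is exactly what is needed: writing $f(x)-f(y)=(x-y)\,g(x,y)$ for a continuous $g$ on $\mathbb{Z}_p\times\mathbb{Z}_p$ (so that $f'(x)=g(x,x)$), I would set $x=p^n$ and $y=0$ to obtain $f(p^n)-f(0)=p^n\,g(p^n,0)$. Hence the boundary term equals $g(p^n,0)$ exactly, with no cancellation issues. Since $p^n\to 0$ in $\mathbb{Z}_p$ and $g$ is continuous, $g(p^n,0)\to g(0,0)=f'(0)$, which yields the case $k=1$, namely $\int_{\mathbb{Z}_p} f(t+1)\,\d t = \int_{\mathbb{Z}_p} f(t)\,\d t + f'(0)$. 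The delicate point is precisely that the naive limit of $p^{-n}\big(f(p^n)-f(0)\big)$ is meaningless without the uniform factorization supplied by $g$; strict (rather than mere pointwise) differentiability is what makes $g(p^n,0)$ rather than some ill-behaved difference quotient the object under the limit.

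For general $k$ I would iterate the $k=1$ formula along integer shifts. The key observation is that $S^1(\mathbb{Z}_p,\mathbb{Q}_p)$ is closed under such shifts: for each integer $m\ge 0$ the function $h_m(t):=f(t+m)$ satisfies $h_m(x)-h_m(y)=(x-y)\,g(x+m,y+m)$, exhibiting the required continuous factor, and consequently $h_m'(\ell)=g(\ell+m,\ell+m)=f'(\ell+m)$. Applying the $k=1$ case to $h_m$ therefore gives
\[
\int_{\mathbb{Z}_p} f(t+m+1)\,\d t = \int_{\mathbb{Z}_p} f(t+m)\,\d t + f'(m).
\]
Writing $I_m:=\int_{\mathbb{Z}_p} f(t+m)\,\d t$, this reads $I_{m+1}=I_m+f'(m)$, and summing the telescoping identity over $m=0,1,\dots,k-1$ produces
\[
\int_{\mathbb{Z}_p} f(t+k)\,\d t - \int_{\mathbb{Z}_p} f(t)\,\d t = \sum_{m=0}^{k-1} f'(m) = \sum_{\ell=0}^{k-1} f'(\ell),
\]
which is the assertion. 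Once the closure of $S^1$ under integer shifts is in hand, this final stage is pure bookkeeping, so essentially all of the work is in the boundary-term limit of the preceding paragraph.
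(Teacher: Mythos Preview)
Your argument is correct and is precisely the standard proof: telescope the partial averages to a single boundary term $(f(p^n)-f(0))/p^n$, invoke the strict-differentiability factorization $f(x)-f(y)=(x-y)g(x,y)$ to identify that term with $g(p^n,0)\to g(0,0)=f'(0)$, and then iterate over integer shifts. The paper itself does not supply a proof of this lemma at all\,---\,it is quoted verbatim from Robert's textbook \cite[Proposition~2, p.~265]{Rob2000}\,---\,so there is nothing to compare against; your write-up is essentially what one finds there.
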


For any positive integer $k$, we denote by $k_{-}$ the non-negative integer obtained by deleting the leading $p$-adic digit of~$k$. 
In other words, if the $p$-adic expansion of $k$ assumes the form
$k=a_0+a_1p+\dots+a_{l-1}p^{l-1}+a_{l}p^{l}$ with $a_l \ne 0$, then 
\[
k_{-} = a_0+a_1p+\dots+a_{l-1}p^{l-1}.
\] 
For estimating the $p$-adic norm of the Volkenborn integral of a function $f \in S^{1}(\mathbb{Z}_p,\mathbb{Q}_p)$, we will use the characteristic
\[
\triangle(f) := \min\left\{\inf_{k \ge 1} v_p\left( \frac{f(k)-f(k_{-})}{k-k_{-}} \right),\, 1+v_p(f(0)) \right\} \in \mathbb{Z} \cup \{+\infty\},
\]
where we take the convention that $v_p(0)=+\infty$.
This was first introduced by the second author in \cite{Spr2020} and further developed by the first author in \cite{Lai2025}. 

We have the following properties of $\triangle$ viewed as an operator on the space $S^{1}(\mathbb{Z}_p,\mathbb{Q}_p)$. 

\begin{lemma}[{\cite[Lemma 2.4]{Lai2025}}]
\label{lem_estimation_of_Volkenborn_integral}
Let $f \in S^{1}(\mathbb{Z}_p,\mathbb{Q}_p)$. Then we have
\begin{equation*}
v_p\left( \int_{\mathbb{Z}_p} f(t)\,\d t \right) \ge \triangle(f) - 1.
\end{equation*}
\end{lemma}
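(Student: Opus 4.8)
The plan is to reduce the statement to a uniform valuation bound on the partial Volkenborn sums and then pass to the limit. Set $\Delta := \triangle(f)$ and $S_n := \sum_{k=0}^{p^n - 1} f(k)$, so that, by Volkenborn integrability, $p^{-n} S_n \to \int_{\mathbb{Z}_p} f(t)\,\d t$. I would prove by induction on $n \ge 0$ that
\[
v_p(S_n) \ge n + \Delta - 1 .
\]
Granting this, $v_p(p^{-n}S_n) \ge \Delta - 1$ for every $n$; since the ball $\{x \in \mathbb{Q}_p : v_p(x) \ge \Delta - 1\}$ is closed, the limit satisfies $v_p\!\left(\int_{\mathbb{Z}_p} f(t)\,\d t\right) \ge \Delta - 1$, which is the assertion. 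The base case $n = 0$ is immediate: $S_0 = f(0)$, and $v_p(f(0)) \ge \Delta - 1$ by the second entry in the definition of $\triangle(f)$.

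For the inductive step I would establish the one-step identity $S_n = p\, S_{n-1} + R_n$ with $v_p(R_n) \ge n + \Delta - 1$, after which the inductive hypothesis $v_p(S_{n-1}) \ge (n-1) + \Delta - 1$ gives $v_p(p\,S_{n-1}) \ge n + \Delta - 1$ and hence the claim. To obtain the identity, partition $\{0, 1, \dots, p^n - 1\}$ by the leading $p$-adic digit: each such $k$ is uniquely $k = j + a p^{n-1}$ with $0 \le j < p^{n-1}$ and $0 \le a < p$. The $p^{n-1}$ values with $a = 0$ contribute exactly $S_{n-1}$. For fixed $a \in \{1, \dots, p-1\}$ and any $j$, the crucial elementary fact is that the leading digit of $j + a p^{n-1}$ sits in position $n-1$, so $(j + a p^{n-1})_{-} = j$; strict differentiability then yields
\[
f(j + a p^{n-1}) - f(j) = a p^{n-1} \cdot \frac{f(j + a p^{n-1}) - f\bigl((j + a p^{n-1})_{-}\bigr)}{(j + a p^{n-1}) - (j + a p^{n-1})_{-}} ,
\]
and the right-hand side has valuation $\ge v_p(a p^{n-1}) + \Delta = (n - 1) + \Delta$ because $v_p(a) = 0$ and the difference quotient is controlled by the first entry in the definition of $\triangle(f)$. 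Summing over $j$ shows that each block with $a \ge 1$ equals $S_{n-1}$ plus a term of valuation $\ge (n-1) + \Delta$; adding the $p - 1$ such blocks to the $a = 0$ block produces $S_n = p\,S_{n-1} + R_n$ with $v_p(R_n) \ge n + \Delta - 1$ by the ultrametric inequality.

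The only place that needs genuine care is the combinatorics of the leading-digit map $k \mapsto k_{-}$: one must verify the identity $(j + a p^{n-1})_{-} = j$ for all $a \in \{1, \dots, p-1\}$ and all $0 \le j < p^{n-1}$, including the degenerate case $j = 0$ (where $(a p^{n-1})_{-} = 0$), and one must note that the difference quotient $\bigl(f(k) - f(k_{-})\bigr)/(k - k_{-})$ appearing above is precisely the value $g(k, k_{-})$ furnished by the $S^1(\mathbb{Z}_p, \mathbb{Q}_p)$-condition, so that its $p$-adic valuation is bounded below by $\triangle(f)$. With those two points pinned down, the rest is the routine induction sketched above together with the lower semicontinuity of $v_p$. (When $\triangle(f) = +\infty$ the bound reads $S_n = 0$, and the argument degenerates harmlessly.)
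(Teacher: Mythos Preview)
Your argument is correct. The paper itself does not supply a proof of this lemma; it simply quotes it from \cite[Lemma~2.4]{Lai2025}, so there is no in-paper proof to compare against. Your induction on the partial sums $S_n=\sum_{k=0}^{p^n-1}f(k)$, using the decomposition $k=j+ap^{n-1}$ and the identity $(j+ap^{n-1})_{-}=j$ for $1\le a\le p-1$, is exactly the natural route and is essentially the argument one finds in the cited source.

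Two cosmetic remarks. First, the phrase ``strict differentiability then yields'' before the displayed equation is slightly misleading: that equation is pure algebra (multiply and divide by $k-k_{-}$), and the bound $v_p\big((f(k)-f(k_{-}))/(k-k_{-})\big)\ge\Delta$ comes directly from the \emph{definition} of $\triangle(f)$, not from the continuity of $g$. The $S^1$ hypothesis is used only to guarantee Volkenborn integrability so that the limit exists. Second, in the degenerate case $\Delta=+\infty$ your parenthetical is right but could be one line more explicit: iterating $k\mapsto k_{-}$ from any $k\ge1$ terminates at $0$ after finitely many steps, so $f(k)=f(0)=0$ on $\mathbb Z_{\ge0}$ and hence $S_n=0$. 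Neither point affects validity.
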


\begin{lemma}
\label{lem_properties_of_the_triangle_operator}
The following properties hold for the operator $\triangle$.
\begin{enumerate}
\item[\textup{(a)}] For any $f,g \in S^{1}(\mathbb{Z}_{p},\mathbb{Q}_{p})$, we have
\[
\triangle(f + g) \ge \min\{ \triangle(f), \triangle(g) \}.
\]

\item[\textup{(b)}] For any $f \in S^{1}(\mathbb{Z}_{p},\mathbb{Q}_{p})$ and $C \in \mathbb{Q}_p$, we have
\[
\triangle(C \cdot f) = \triangle(f) + v_p(C).
\]

\item[\textup{(c)}] If $f,g \in S^{1}(\mathbb{Z}_{p},\mathbb{Z}_{p})$, then 
\[ 
\triangle(f\cdot g) \ge \min\{\triangle(f),\triangle(g)\}. 
\]

\item[\textup{(d)}] If $f(t) = \sum_{j=0}^{\infty} a_jt^{j} \in \mathbb{Z}_{p}\llbracket t \rrbracket$ and $\lim_{j \to \infty} |a_j|_p = 0$, then 
\[
\triangle(f) \ge 0.
\]

\item[\textup{(e)}] For $n,j \in \mathbb{Z}$ with $n>0$ and 
\[ 
f(t) = \binom{t+j}{n} = \frac{(t+j)(t+j-1)\dotsb(t+j-n+1)}{n!}, 
\] 
we have 
\[ 
\triangle(f) \ge -\left\lfloor \frac{\log n}{\log p} \right\rfloor.
\] 
\end{enumerate} 
\end{lemma}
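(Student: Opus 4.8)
The plan is to derive (a) and (b) directly from the defining formula for $\triangle$. Applying the ultrametric inequality $v_p(u+v)\ge\min\{v_p(u),v_p(v)\}$ termwise\,---\,both to the difference quotients $\frac{(f+g)(k)-(f+g)(k_-)}{k-k_-}$ and to the quantity $1+v_p\bigl((f+g)(0)\bigr)$\,---\,gives (a); the identity $v_p(uv)=v_p(u)+v_p(v)$ applied in the same two places gives (b) (the case $C=0$ being trivial). Here one first records the routine facts that $f+g$ and $Cf$ again belong to $S^{1}(\mathbb Z_p,\mathbb Q_p)$.

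For (c) I would start from the observation that a product of two functions in $S^{1}(\mathbb Z_p,\mathbb Z_p)$ is again in $S^{1}(\mathbb Z_p,\mathbb Z_p)$, and then use the Leibniz-type decomposition
\[
(fg)(k)-(fg)(k_-)=f(k)\bigl(g(k)-g(k_-)\bigr)+g(k_-)\bigl(f(k)-f(k_-)\bigr).
\]
Dividing by $k-k_-$ and using $v_p(f(k))\ge 0$, $v_p(g(k_-))\ge 0$ bounds the difference quotient for $fg$ from below by $\min\{\triangle(f),\triangle(g)\}$. For the constant term I would exploit that $\triangle(f)\le 1+v_p(f(0))$ and $v_p(g(0))\ge 0$, so that $1+v_p\bigl((fg)(0)\bigr)=1+v_p(f(0))+v_p(g(0))\ge\triangle(f)\ge\min\{\triangle(f),\triangle(g)\}$; taking the minimum over all contributions proves (c).

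For (d) I would first invoke the standard fact that a power series with coefficients in $\mathbb Z_p$ tending to $0$ defines a function in $S^{1}(\mathbb Z_p,\mathbb Z_p)$, so $\triangle(f)$ makes sense; then $1+v_p(f(0))=1+v_p(a_0)\ge 1$, while for $k\ge 1$ the expansion
\[
\frac{f(k)-f(k_-)}{k-k_-}=\sum_{j\ge 0}a_j\bigl(k^{j-1}+k^{j-2}k_-+\cdots+k_-^{\,j-1}\bigr)
\]
is a $p$-adically convergent series each of whose terms has non-negative valuation, so the difference quotient does too; hence $\triangle(f)\ge 0$.

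Part (e) is the crux, and I expect it to be the main obstacle. The naive route\,---\,writing $\binom{t+j}{n}=\frac1{n!}(t+j)(t+j-1)\cdots(t+j-n+1)$ and combining (b), (c), (d)\,---\,is too lossy: it yields only $\triangle\ge -v_p(n!)$, which is far weaker than the asserted $\triangle\ge -\lfloor\log_p n\rfloor$ once $n\ge p^{2}$. Setting $L=\lfloor\log_p n\rfloor$, the plan is to prove instead the auxiliary estimate
\[
v_p\!\left(\binom{x+p^{s}}{n}-\binom{x}{n}\right)\ge s-L\qquad(x\in\mathbb Z,\ s\ge 1),
\]
using the Chu--Vandermonde identity $\binom{x+p^{s}}{n}-\binom{x}{n}=\sum_{i=0}^{n-1}\binom{x}{i}\binom{p^{s}}{n-i}$ together with the bound $v_p\binom{p^{s}}{r}\ge s-v_p(r)\ge s-L$ for $1\le r\le n<p^{L+1}$ (the first inequality from $\binom{p^{s}}{r}=\frac{p^{s}}{r}\binom{p^{s}-1}{r-1}$ when this is nonzero, the second since $p^{v_p(r)}\le r\le n$). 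Granting this, I would fix $k\ge 1$ and let $l$ be determined by $p^{l}\le k<p^{l+1}$, so that $k-k_-=a_lp^{l}$ with $0<a_l<p$ and $v_p(k-k_-)=l$. If $l\le L$, then $\binom{k+j}{n}-\binom{k_-+j}{n}$ is an integer and the difference quotient has valuation $\ge -l\ge -L$. If $l\ge L+1$, then telescoping
\[
\binom{k+j}{n}-\binom{k_-+j}{n}=\sum_{b=0}^{a_l-1}\left[\binom{(k_-+j+bp^{l})+p^{l}}{n}-\binom{k_-+j+bp^{l}}{n}\right]
\]
and applying the auxiliary estimate to each bracket shows $v_p\bigl(\binom{k+j}{n}-\binom{k_-+j}{n}\bigr)\ge l-L$, so again the difference quotient has valuation $\ge -L$. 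Since moreover $1+v_p(f(0))=1+v_p\binom{j}{n}\ge 1\ge -L$, we would conclude $\triangle\bigl(\binom{t+j}{n}\bigr)\ge -L$. The subtle point to get right is precisely the auxiliary estimate: its loss is governed by $v_p(r)$ for $r\le n$, hence by $\lfloor\log_p n\rfloor$, and not by the much larger $v_p(n!)$.
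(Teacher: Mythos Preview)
Your argument is correct in all five parts. For (a) and (b) you do exactly what the paper does, namely read them off from the definition of $\triangle$ together with the ultrametric inequality and the multiplicativity of $v_p$. For (c), (d), (e) the paper gives no argument at all but simply refers to \cite[Lemma~2.5]{Lai2025}; your self-contained proofs are sound. In particular, your treatment of~(e)---reducing to the auxiliary estimate $v_p\bigl(\binom{x+p^{s}}{n}-\binom{x}{n}\bigr)\ge s-\lfloor\log_p n\rfloor$ via Chu--Vandermonde and the identity $\binom{p^{s}}{r}=\frac{p^{s}}{r}\binom{p^{s}-1}{r-1}$, then telescoping in steps of $p^{l}$---is a clean and standard way to get the sharp bound, correctly identifying that the loss is governed by $\max_{1\le r\le n}v_p(r)=\lfloor\log_p n\rfloor$ rather than $v_p(n!)$. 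One cosmetic remark: your case distinction $l\le L$ versus $l\ge L+1$ in~(e) is not actually needed, since the telescoping argument already covers all $l\ge 1$ (for $l\le L$ it gives the weaker but still sufficient bound $v_p(\text{numerator})\ge l-L$, while the integrality bound $\ge 0$ you use instead is harmless).
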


\begin{proof}
For parts (c), (d) and (e), see \cite[Lemma 2.5]{Lai2025}; parts (a) and (b) are clear from definition.
\end{proof}

\subsection{$p$-adic zeta values}

%
%
%
We first recall some basic facts about $p$-adic $L$-functions and $p$-adic zeta values. For a prime number $p$, define
\[
	q_p:=\begin{cases}
		p &\text{if}\; p\ne 2,\\
		4 &\text{if}\; p=2.
	\end{cases}
\]
Then $\mathbb{Q}_p^\times \cong p^{\mathbb{Z}} \cdot \mu_{\varphi(q_p)}(\mathbb{Z}_p)\times (1+q_p\mathbb{Z}_p)$ and we write 
\begin{align*}
	\omega &\colon \mathbb{Q}_p^\times\to p^{\mathbb{Z}} \cdot \mu_{\varphi(q_p)}(\mathbb{Z}_p)\subseteq \mathbb{Q}_p^\times, \\ 
	\langle \,\cdot\, \rangle &\colon \mathbb{Q}_p^\times\to (1+q_p\mathbb{Z}_p) \subseteq\mathbb{Q}_p^\times
\end{align*}
for the induced projections; $\omega$ is known as the \emph{Teichm\"uller character}.
For $s\in \mathbb{C}_p\setminus \{1\}$ with $|s|_p<q_p p^{-1/(p-1)}$ and $x\in \mathbb{Q}_p$ with $|x|_p\ge q_p$, we define the \emph{$p$-adic Hurwitz zeta function} by the Volkenborn integral
\[
	\zeta_p(s,x):=\frac{1}{s-1}\int_{\mathbb{Z}_p} \langle t+x \rangle^{1-s}\,\d t.
\]
The \emph{Kubota--Leopoldt $p$-adic $L$-function} associated to a Dirichlet character $\chi$ of conductor $f$ is now defined as follows: Let $M$ be a common multiple of $f$ and $q_p$; then
\[
	L_p(s,\chi):=\frac{\langle M\rangle^{1-s}}{M}\sum_{\substack{j=0\\ p\nmid j}}^{M-1} \chi(j)\zeta_p\bigg( s,\frac{j}{M} \bigg).
\]
It is not difficult to check that this definition does not depend on the choice of~$M$.
For more details, we refer the reader to \cite[Chap.~11]{Coh2007}. 
Finally, we define $p$-adic zeta values as follows.

\begin{definition}
\label{def zeta_p}
For any integer $s \ge 2$, the \emph{$p$-adic zeta value} $\zeta_p(s)$ is given by
\[ 
\zeta_p(s) := L_p(s,\omega^{1-s}).
\]
\end{definition}

Notice that the definitions of $\zeta_p(s)$ by Coleman \cite{Col1984} or Furusho \cite{Fur2004} differ from ours by the Euler factor $(1-p^{-s})^{-1}$ at~$p$. 
However, for each fixed integer $s\ge2$, this factor is a non-zero rational number, so it does not matter which definition is used for proving irrationality. 
Let us also note that Beukers in \cite{Beu2008} refers to $L_p(s,\chi_0)$ corresponding to the principal character $\chi=\chi_0$ as to $\zeta_p(s)$.
The name of `$p$-adic zeta' is justified by the following characterisation (see \cite[Lemma 2.4]{Cal2005}):
\[
\zeta_p(s) = \lim_{\substack{k\to s \;\text{$p$-adically}\\k \in \mathbb{Z}_{<0}, \; k\equiv s \pmod{p-1}}} \zeta(k) \in \mathbb{Q}_p.
\]
In other words, the $p$-adic zeta value $\zeta_p(s)$ is a $p$-adic limit of special values of the Riemann zeta function at negative integers; in particular, we have $\zeta_p(s)=0$ for any even positive integer $s$.

In the special case $p=2$ and $s \in \mathbb{Z}_{\ge 2}$, it follows from Definition \ref{def zeta_p} that
\begin{align}
\zeta_2(s)
&= L_2(s,\omega^{1-s}) \notag\\ 
&= \frac{1}{4}\left( \omega(1)^{1-s}\zeta_2\left( s,\frac{1}{4}\right) + \omega(3)^{1-s}\zeta_2\left( s,\frac{3}{4}\right) \right) \notag\\
&= \frac{1}{4}\left( \frac{1}{s-1}\int_{\mathbb{Z}_2} \frac{\d t}{\langle t+1/4 \rangle^{s-1}} + \frac{(-1)^{1-s}}{s-1}\int_{\mathbb{Z}_2} \frac{\d t}{\langle t+3/4 \rangle^{s-1}} \right) \notag\\
&= \frac{1}{4(s-1)} \left( \int_{\mathbb{Z}_2} \frac{\d t}{(1+4t)^{s-1}} +  \int_{\mathbb{Z}_2} \frac{\d t}{(3+4t)^{s-1}}  \right) \notag\\
&= \frac{1}{4(s-1)} \bigg( \lim_{N \to \infty}  \frac{1}{2^N} \sum_{k=0}^{2^N -1} \frac{1}{(1+4k)^{s-1}} + \lim_{N \to \infty}  \frac{1}{2^N} \sum_{k=0}^{2^N -1} \frac{1}{(3+4k)^{s-1}}\bigg).
\label{271}
\end{align}
We can further translate this expression into a Volkenborn integral.

\begin{lemma}
\label{lem_shift_by_1/2}
For any $s \in \mathbb{Z}_{\ge 2}$ we have
\[  
\frac{1}{s-1}\int_{\mathbb{Z}_2} \frac{\d t}{(t+1/2)^{s-1}} = 2^{s} \cdot \zeta_2(s).
 \]
\end{lemma}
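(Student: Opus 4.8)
The plan is to derive the identity straight from the explicit formula \eqref{271} for $\zeta_2(s)$ by reindexing the partial sums that define the Volkenborn integral on the left-hand side. First I would check that the integrand is admissible: for $t\in\mathbb{Z}_2$ one has $t+1/2=(2t+1)/2$ with $2t+1$ a $2$-adic unit, so
\[
\frac{1}{(t+1/2)^{s-1}}=2^{s-1}(1+2t)^{-(s-1)}=2^{s-1}\sum_{m\ge0}\binom{-(s-1)}{m}2^m t^m
\]
is represented by a power series with coefficients in $\mathbb{Z}_2$ tending $2$-adically to $0$; in particular it lies in $S^{1}(\mathbb{Z}_2,\mathbb{Q}_2)$ and is Volkenborn integrable, and the same is true of $t\mapsto(1+4t)^{-(s-1)}$ and $t\mapsto(3+4t)^{-(s-1)}$, the functions already occurring in \eqref{271}. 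Hence all the Volkenborn integrals below exist and are computed as the $2$-adic limits of the normalized sums over $\{0,1,\dots,2^M-1\}$.

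Next I would carry out the reindexing. For each $M\ge1$,
\[
\frac{1}{2^M}\sum_{k=0}^{2^M-1}\frac{1}{(k+1/2)^{s-1}}=\frac{2^{s-1}}{2^M}\sum_{k=0}^{2^M-1}\frac{1}{(2k+1)^{s-1}},
\]
and as $k$ runs over $\{0,\dots,2^M-1\}$ the quantity $2k+1$ runs over all odd integers $1,3,\dots,2^{M+1}-1$. Splitting these $2^M$ integers by their residue modulo $4$\,---\,those $\equiv1$ are $1+4\ell$ and those $\equiv3$ are $3+4\ell$ for $0\le\ell\le 2^{M-1}-1$\,---\,yields
\[
\frac{1}{2^M}\sum_{k=0}^{2^M-1}\frac{1}{(k+1/2)^{s-1}}=2^{s-2}\left(\frac{1}{2^{M-1}}\sum_{\ell=0}^{2^{M-1}-1}\frac{1}{(1+4\ell)^{s-1}}+\frac{1}{2^{M-1}}\sum_{\ell=0}^{2^{M-1}-1}\frac{1}{(3+4\ell)^{s-1}}\right).
\]
Passing to the limit $M\to\infty$ gives
\[
\int_{\mathbb{Z}_2}\frac{\d t}{(t+1/2)^{s-1}}=2^{s-2}\left(\int_{\mathbb{Z}_2}\frac{\d t}{(1+4t)^{s-1}}+\int_{\mathbb{Z}_2}\frac{\d t}{(3+4t)^{s-1}}\right),
\]
and, by \eqref{271}, the bracket equals $4(s-1)\zeta_2(s)$; dividing through by $s-1$ we obtain $\frac{1}{s-1}\int_{\mathbb{Z}_2}(t+1/2)^{-(s-1)}\,\d t=2^{s-2}\cdot4\,\zeta_2(s)=2^s\zeta_2(s)$, as asserted. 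Equivalently, the middle step is the standard distribution relation $\int_{\mathbb{Z}_2}f(t)\,\d t=\tfrac12\bigl(\int_{\mathbb{Z}_2}f(2t)\,\d t+\int_{\mathbb{Z}_2}f(1+2t)\,\d t\bigr)$ applied to $f(t)=(t+1/2)^{-(s-1)}$, using $2t+1/2=(1+4t)/2$ and $1+2t+1/2=(3+4t)/2$.

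I do not expect a serious obstacle here: the argument is essentially bookkeeping. The only points deserving care are (i) confirming that the integrand really lies in $S^{1}(\mathbb{Z}_2,\mathbb{Q}_2)$, so that the Volkenborn integral is defined and is computed by the limit of the normalized sums over the complete sets of residues $\{0,\dots,2^M-1\}$, and (ii) getting the congruence-class reindexing and the normalizing powers of $2$ to match up exactly\,---\,in particular noticing that each of the two residue classes contributes precisely $2^{M-1}$ terms, which is what turns the prefactor $2^{s-1}/2^M$ into $2^{s-2}$ times an average over $2^{M-1}$ points.
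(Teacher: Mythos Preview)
Your proof is correct and follows essentially the same route as the paper: rewrite $(t+1/2)^{-(s-1)}=2^{s-1}(1+2t)^{-(s-1)}$, expand the Volkenborn integral as the limit of normalized partial sums, split the odd integers $2k+1$ into the residue classes $1$ and $3$ modulo $4$, and compare with formula~\eqref{271}. The only difference is cosmetic (you index with $M$ where the paper uses $N+1$), and you add the explicit check that the integrand lies in $S^{1}(\mathbb{Z}_2,\mathbb{Q}_2)$, which the paper leaves implicit.
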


\begin{proof}
We have
\begin{align}
\frac{1}{s-1}\int_{\mathbb{Z}_2} \frac{\d t}{(t+1/2)^{s-1}}
&= \frac{2^{s-1}}{s-1} \int_{\mathbb{Z}_2} \frac{\d t}{(1+2t)^{s-1}} \notag\\
&= \frac{2^{s-1}}{s-1} \lim_{N \to \infty} \frac{1}{2^{N+1}} \sum_{k=0}^{2^{N+1}-1} \frac{1}{(1+2k)^{s-1}} \notag\\
&= \frac{2^{s-1}}{s-1} \lim_{N \to \infty} \frac{1}{2^{N+1}}\bigg( \sum_{k=0}^{2^N-1} \frac{1}{(1+4k)^{s-1}} + \sum_{k=0}^{2^N-1} \frac{1}{(3+4k)^{s-1}} \bigg).
\label{272}
\end{align}
Comparing \eqref{272} with \eqref{271}, we obtain the result.
\end{proof}

\section{Rational functions and linear forms}

In this section, we first introduce a sequence of rational functions. 
Then we make use of them to construct linear forms in $1$ and $\zeta_2(5)$. 
Recall that the Pochhammer symbol $(t)_n$ is defined by $(t)_{n}:=t(t+1)\cdots(t+n-1)$ for $n \in \mathbb{Z}_{>0}$ with the convention $(t)_0:= 1$.

\begin{definition}
\label{def_R_n(t)}
For $n \in \mathbb{Z}_{\ge0}$, define the rational function $R_n(t) \in \mathbb{Q}(t)$ by
\[ 
R_n(t) :=  2^{8n} \cdot (2t+n) \cdot \frac{(t+1/2)_n^4}{(t)_{n+1}^4}. 
\]
\end{definition}  

We mention that similar choices of rational function appear in \cite{Bel2019,Lai2025,Riv2017+}. 

For a rational function $R(t)=P(t)/Q(t)$, where $P$, $Q$ are polynomials, we define the degree of $R(t)$ by $\deg R := \deg P - \deg Q$. 
Note that for any $n \in \mathbb{Z}_{\ge0}$ we have 
\begin{equation}\label{deg_R=-3}
\deg R_n = -3. 
\end{equation}
In particular, our rational function admits a partial-fraction decomposition
\begin{equation}
\label{def_rik}
R_n(t) =: \sum_{i=1}^{4}\sum_{k=0}^{n} \frac{r_{n,i,k}}{(t+k)^i},
\end{equation}
with the coefficients $r_{n,i,k} \in \mathbb{Q}$ uniquely determined by $R_n(t)$. 

\begin{definition}
\label{def_Sn}
For $n \in \mathbb{Z}_{\ge0}$, define the following $2$-adic quantity:
\[ 
S_n :=  - \int_{\mathbb{Z}_2} R_n'\Big( t+\frac{1}{2} \Big)\,\d t, 
\]
where $R_n'(t)$ is the derivative function of $R_n(t)$ with respect to $t$.
\end{definition}

We claim that $S_n$ is a linear form in $1$ and $\zeta_2(5)$ with rational coefficients.

\begin{lemma}
\label{lem_linear_forms}
For any $n \in \mathbb{Z}_{\ge0}$, we have
\[ 
S_n = \rho_{n,0} + \rho_{n,3} \cdot \zeta_2(5), 
\]
where
\begin{align}
\rho_{n,0} &= -\sum_{i=1}^{4}\sum_{k = 0}^{n}\sum_{\ell=1}^k \frac{i(i+1)r_{n,i,k}}{(\ell-1/2)^{i+2}} \in \mathbb{Q},
\label{def_rho_0}\\
\rho_{n,3} &= 384 \sum_{k=0}^{n} r_{n,3,k} \in \mathbb{Q}.
\label{def_rho_3}
\end{align}
The convention here and in what follows is that the empty sum \textup(when $k=0$\textup) is understood as $0$.
\end{lemma}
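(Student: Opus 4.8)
The plan is to substitute the partial-fraction expansion \eqref{def_rik} of $R_n$ into the definition of $S_n$, integrate term by term over $\mathbb{Z}_2$, and identify each of the resulting elementary Volkenborn integrals by combining the translation formula of Lemma~\ref{lem_translation_formula} with the identity of Lemma~\ref{lem_shift_by_1/2}.

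First I would differentiate \eqref{def_rik} and shift the argument, obtaining
\[
-R_n'\Bigl(t+\tfrac12\Bigr)=\sum_{i=1}^{4}\sum_{k=0}^{n}\frac{i\,r_{n,i,k}}{(t+k+1/2)^{i+1}}.
\]
Each summand $t\mapsto(t+k+1/2)^{-(i+1)}$ is a rational function whose only pole is at $t=-k-\tfrac12\notin\mathbb{Z}_2$ (indeed $|t+k+\tfrac12|_2=2$ for every $t\in\mathbb{Z}_2$), so it lies in $S^1(\mathbb{Z}_2,\mathbb{Q}_2)$; hence $R_n'(t+1/2)$ is Volkenborn integrable, $S_n$ is well defined, and
\[
S_n=\sum_{i=1}^{4}\sum_{k=0}^{n}i\,r_{n,i,k}\int_{\mathbb{Z}_2}\frac{\d t}{(t+k+1/2)^{i+1}}.
\]

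Next I would apply Lemma~\ref{lem_translation_formula} with $f(t)=(t+1/2)^{-(i+1)}$, for which $f'(t)=-(i+1)(t+1/2)^{-(i+2)}$ (the case $k=0$ being trivial and handled by the empty-sum convention), to get
\[
\int_{\mathbb{Z}_2}\frac{\d t}{(t+k+1/2)^{i+1}}
=\int_{\mathbb{Z}_2}\frac{\d t}{(t+1/2)^{i+1}}-(i+1)\sum_{\ell=1}^{k}\frac{1}{(\ell-1/2)^{i+2}}.
\]
By Lemma~\ref{lem_shift_by_1/2} applied with $s=i+2$, the first integral equals $(i+1)\,2^{\,i+2}\,\zeta_2(i+2)$. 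For $i\in\{2,4\}$ this is $0$ because $\zeta_2$ vanishes at even arguments; for $i=1$ it equals $16\,\zeta_2(3)$, but its total contribution to $S_n$ is $16\,\zeta_2(3)\sum_{k=0}^{n}r_{n,1,k}$, which vanishes since $\deg R_n=-3<-1$ forces $\sum_{k=0}^{n}r_{n,1,k}=\lim_{t\to\infty}t\,R_n(t)=0$ (equivalently, the coefficient of $t^{-1}$ in the Laurent expansion of $R_n$ at infinity is $0$). Only $i=3$ survives, contributing $\sum_{k=0}^{n}3\,r_{n,3,k}\cdot 4\cdot 2^{5}\,\zeta_2(5)=384\,\zeta_2(5)\sum_{k=0}^{n}r_{n,3,k}=\rho_{n,3}\,\zeta_2(5)$ as in \eqref{def_rho_3}.

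Finally I would gather the translation corrections: the $(i,k)$-summand contributes
\[
i\,r_{n,i,k}\cdot\bigl(-(i+1)\bigr)\sum_{\ell=1}^{k}\frac{1}{(\ell-1/2)^{i+2}}
=-\sum_{\ell=1}^{k}\frac{i(i+1)\,r_{n,i,k}}{(\ell-1/2)^{i+2}},
\]
and summing over $i$ and $k$ reproduces exactly $\rho_{n,0}$ of \eqref{def_rho_0}. Combined with the previous step this yields $S_n=\rho_{n,0}+\rho_{n,3}\,\zeta_2(5)$, with $\rho_{n,0},\rho_{n,3}\in\mathbb{Q}$ because the coefficients $r_{n,i,k}$ are rational. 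I expect the only genuinely nontrivial point to be the vanishing of $\sum_{k}r_{n,1,k}$ — that is, the degree bound \eqref{deg_R=-3} — since this is precisely what removes the spurious $\zeta_2(3)$ and leaves a clean linear form in $1$ and $\zeta_2(5)$; the remaining work is routine bookkeeping with signs and indices.
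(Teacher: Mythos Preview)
Your proposal is correct and follows essentially the same route as the paper: substitute the partial-fraction decomposition into the Volkenborn integral, apply the translation formula (Lemma~\ref{lem_translation_formula}) together with Lemma~\ref{lem_shift_by_1/2}, use the vanishing of $\zeta_2$ at even integers, and kill the $\zeta_2(3)$ term via $\sum_k r_{n,1,k}=\lim_{t\to\infty}tR_n(t)=0$ from $\deg R_n=-3$. Your extra remark that $(t+k+1/2)^{-(i+1)}\in S^1(\mathbb{Z}_2,\mathbb{Q}_2)$ because the pole lies outside $\mathbb{Z}_2$ is a nice justification the paper leaves implicit.
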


\begin{proof}
By Definition \ref{def_Sn} and Equation \eqref{def_rik}, we have
\begin{equation}
\label{341}
S_n =\sum_{i=1}^{4} \sum_{k=0}^{n} ir_{n,i,k}  \int_{\mathbb{Z}_2} \frac{\d t}{(t+k+1/2)^{i+1}}.
\end{equation}
By Lemma \ref{lem_translation_formula} and Lemma \ref{lem_shift_by_1/2}, we have
\begin{align}
\int_{\mathbb{Z}_2} \frac{\d t}{(t+k+1/2)^{i+1}}
&= \int_{\mathbb{Z}_2} \frac{\d t}{(t+1/2)^{i+1}} -(i+1)\sum_{\ell=1}^k \frac{1}{(\ell-1/2)^{i+2}} \notag\\
&= (i+1)2^{i+2}\zeta_2(i+2) -(i+1)\sum_{\ell=1}^k \frac{1}{(\ell-1/2)^{i+2}}.
\label{342}
\end{align}
Substituting \eqref{342} into \eqref{341}, and using $\zeta_2(s)=0$ for even $s$, we obtain 
\[ 
S_n = \rho_{n,0} + \left(16\sum_{k = 0}^{n} r_{n,1,k}\right) \cdot \zeta_2(3) + \rho_{n,3} \cdot \zeta_2(5). 
\]
By \eqref{def_rik} and \eqref{deg_R=-3}, we have 
\[ 
\sum_{k=0}^{n} r_{n,1,k} = \lim_{t \to \infty} tR_n(t) = 0. 
\]
Therefore, $S_n = \rho_{n,0} + \rho_{n,3} \cdot \zeta_2(5)$, as desired.
\end{proof}

\begin{remark}
\label{rem_diff_arch}
Our derivation `secretly' corresponds to the equality
\[ 
\sum_{m=0}^{\infty} R_n''\Big(m+\frac{1}{2}\Big) = \rho_{n,0} + \rho_{n,3} \cdot \left(1 - 2^{-5} \right)\cdot \zeta(5)
\]
in the Archimedean case.
The coefficients $\rho_{n,0} \in \mathbb{Q}$ and $\rho_{n,3} \in \mathbb{Q}$ in this formula are exactly the same as in Lemma~\ref{lem_linear_forms}, and $\left(1 - 2^{-5} \right)\cdot \zeta(5)$ is the value of the Riemann zeta function at $s=5$ with the Euler factor at $2$ removed. 
This correspondence to linear forms in classical zeta values explains the motivation behind the artificial minus sign appearing in Definition \ref{def_Sn}.
\end{remark}

\section{A three-term recursion for $\rho_{n,0}$ and $\rho_{n,3}$}

With the help of Zeilberger’s algorithm of creative telescoping \cite[Chap.~6]{PWZ1997}, we obtain the following recursion for $(R_n(t))_{n \ge 0}$.

\begin{lemma}
For $n \in \mathbb{Z}_{>0}$, let
\begin{align}
T_n(t)
&=\big(8(2n+1)t^4 + 48n(2n+1)t^3 + 2(2n+1)(48n^2-6n-5)t^2
\notag\\ &\quad
+ 2(80n^4 + 16n^3 - 28n^2 - 3n + 3)t
\notag\\ &\quad
+ (48n^5 - 24n^3 + 3n^2 + 4n - 1)\big)
\cdot\frac{2^{8n+4}(t-1/2)_n^4}{(t)_{n+1}^4}.
\label{def_T_n(t)}
\end{align}
Then
\begin{multline}
(n+1)^5 R_{n+1}(t) - 32(2n+1)(8n^4+16n^3+20n^2+12n+3) R_n(t) + 2^{16}n^5 R_{n-1}(t)
\\
= T_n(t+1) - T_n(t).
\label{001}
\end{multline} 
\end{lemma}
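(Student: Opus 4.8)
The plan is to verify \eqref{001} by a direct reduction to a polynomial identity. First I would factor out the common rational factor. Each of the four terms $R_{n+1}(t)$, $R_n(t)$, $R_{n-1}(t)$ on the left and $T_n(t+1)$, $T_n(t)$ on the right is, by Definition \ref{def_R_n(t)} and \eqref{def_T_n(t)}, a polynomial multiple of the rational function $2^{8n}(t-1/2)_n^4 / (t)_{n+1}^4$. Concretely, using
\[
(t+1/2)_n = \frac{(t-1/2)_{n+1}}{t-1/2} = (t-1/2+n)\cdot\frac{(t-1/2)_n}{t-1/2}
\]
and $(t)_{n+2} = (t+n+1)(t)_{n+1}$, $(t)_{n+1} = t\,(t+1)_{n}$, one expresses $R_{n+1}$, $R_{n-1}$ and $T_n(t\pm 1)$ all as $\big(2^{8n}(t-1/2)_n^4/(t)_{n+1}^4\big)$ times an explicit rational function of $t$ (with denominators that are products of the ``boundary'' linear factors $t$, $t+n$, $t+n+1$, $t\pm 1/2$, $t+n\pm 1/2$, etc.). After clearing these boundary denominators, the claimed identity \eqref{001} becomes an identity between two polynomials in $t$ (with coefficients that are explicit polynomials in $n$).

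Second, I would confirm that polynomial identity. The degrees are small and fixed: $\deg R_n = -3$ for all $n$, so after multiplying through by $(t)_{n+1}^4$ and dividing by $(t-1/2)_n^4$ the left-hand side of \eqref{001} has degree at most $5$ (the leading polynomial factors $(n+1)^5$, $-32(2n+1)(\dots)$, $2^{16}n^5$ multiply $R_n$-type pieces of degree $\le 2$ after the $(t)$-shifts are accounted for), and the right-hand side $T_n(t+1)-T_n(t)$ is the forward difference of a degree-$\le 4$ polynomial factor, hence also of bounded degree. So it suffices to check equality of finitely many coefficients, each a polynomial identity in $n$; equivalently, one checks the identity at sufficiently many numerical values of $t$ and $n$, or simply records that this is exactly the certificate produced by Zeilberger's creative-telescoping algorithm. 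I would present it as: $T_n(t)$ is the telescoping certificate, and \eqref{001} is verified by a routine (if lengthy) symbolic computation, referring to \cite[Chap.~6]{PWZ1997} for the algorithm that both discovers and certifies it.

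The main obstacle is purely bookkeeping: organizing the shifts $(t+1/2)_{n+1}$ versus $(t-1/2)_n$ and $(t)_{n+1}$ versus $(t)_{n+2}$ so that every term is written over the \emph{same} denominator $(t)_{n+1}^4$ times $(t-1/2)_n^4$, keeping track of the stray linear factors and the powers of $2$ (note $R_{n+1}$ carries $2^{8(n+1)}=2^8\cdot 2^{8n}$ and the $2^{16}n^5$ coefficient on $R_{n-1}$ compensates for $2^{8(n-1)}=2^{-16}\cdot 2^{8n}$, which is the arithmetic reason the recursion \eqref{eq:rec} has those particular powers of $2$). Once the common factor is stripped, no conceptual difficulty remains — it is a finite check. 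I would therefore keep the write-up short: state that dividing \eqref{001} through by $2^{8n}(t-1/2)_n^4/(t)_{n+1}^4$ reduces it to an identity of rational functions in $t$ whose denominator is a product of the explicit linear factors listed above, clear those, and observe that the resulting polynomial identity in $t$ (with polynomial-in-$n$ coefficients) holds — this being precisely the output of creative telescoping applied to $R_n(t)$, with $T_n(t)$ the certificate.
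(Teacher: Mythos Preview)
Your approach is essentially the same as the paper's: divide \eqref{001} through by the common factor $2^{8(n+1)}(t+1/2)_n^4/(t)_{n+1}^4$, clear the remaining boundary linear factors, and reduce to a finite polynomial identity in $t$ and $n$. The paper's proof is even terser than your sketch and records the actual bounds---degree at most $12$ in $t$ and $14$ in $n$---so your ``degree at most $5$'' remark is off (you have not yet cleared the boundary denominators $(t+n+1)^4$, $(t+n-\tfrac12)^4$, $(t-\tfrac12)^4$ at that point), but this is only a bookkeeping slip, not a gap.
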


\begin{proof}
Dividing both sides of \eqref{001} by $2^{8(n+1)}(t+1/2)_{n}^4 / (t)_{n+1}^4$ and clearing the denominators in the result, reduce verification of Identity \eqref{001} to a linear-algebra check of identity between two polynomials of degree at most 12 in~$t$ and at most 14 in~$n$.
\end{proof}

The telescoping identity for $(R_n(t))_{n \ge 0}$ induces the following recursive formulae for both $(\rho_{n,0})_{n \ge 0}$ and $(\rho_{n,3})_{n \ge 0}$.

\begin{lemma}
\label{lem_rec}
\begin{enumerate}
\item[\textup{(a)}] For each $i \in \{0,3\}$, the sequence $(\rho_{n,i})_{n \ge 0}$ satisfies the three-term relation \eqref{eq:rec}.
\item[\textup{(b)}] For any $n \in \mathbb{Z}_{\ge0}$, we have the `determinant' formula
\[
\rho_{n,0}\rho_{n+1,3} - \rho_{n+1,0}\rho_{n,3} = \frac{3 \cdot2^{16n+18}}{(n+1)^5} \ne 0. 
\]
\end{enumerate}
\end{lemma}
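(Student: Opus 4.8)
The plan is to propagate the telescoping identity \eqref{001} through two linear functionals, each of which annihilates forward differences of the shape $T_n(t+1)-T_n(t)$, and then to deduce the ``determinant'' formula by a Casoratian computation. For $\rho_{n,3}$: extract from both sides of \eqref{001} the coefficient of $(t+k)^{-3}$ in the partial-fraction expansion, and sum the resulting identities over all $k\in\mathbb Z$. Since $T_n(t)$ has denominator $(t)_{n+1}^4$ by \eqref{def_T_n(t)}, it has a genuine partial-fraction expansion with poles only at non-positive integers, so the coefficient of $(t+k)^{-3}$ in $T_n(t+1)$ equals that of $(t+(k-1))^{-3}$ in $T_n(t)$; hence summing over $k$ makes the contribution of $T_n(t+1)-T_n(t)$ telescope to $0$. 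By \eqref{def_rik} and \eqref{def_rho_3}, the sum over $k$ of the coefficient of $(t+k)^{-3}$ in $R_n$ equals $\sum_{k=0}^{n} r_{n,3,k}=\rho_{n,3}/384$, and this yields exactly \eqref{eq:rec} for $(\rho_{n,3})$.

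Next, for $S_n$: apply to \eqref{001} the functional $R\mapsto-\int_{\mathbb Z_2}R'(t+\tfrac12)\,\d t$, which by Definition~\ref{def_Sn} sends $R_n$ to $S_n$; this makes sense because all the rational functions involved have their poles in $\tfrac12+\mathbb Z$, hence are regular on $\mathbb Z_2$ and lie in $S^1(\mathbb Z_2,\mathbb Q_2)$, and the Volkenborn integral is linear. On the right-hand side, with $g(t):=T_n'(t+\tfrac12)\in S^1(\mathbb Z_2,\mathbb Q_2)$, the contribution is $-\int_{\mathbb Z_2}\big(g(t+1)-g(t)\big)\,\d t$, which by Lemma~\ref{lem_translation_formula} with $k=1$ equals $-g'(0)=-T_n''(\tfrac12)$. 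The key structural point is that the factor $(t-\tfrac12)_n^4$ in \eqref{def_T_n(t)} forces $T_n$ to vanish to order at least $4$ at $t=\tfrac12$ for every $n\ge1$, so $T_n''(\tfrac12)=0$; hence $(S_n)$ satisfies \eqref{eq:rec}. Finally, by Lemma~\ref{lem_linear_forms} we have $\rho_{n,0}=S_n-\zeta_2(5)\,\rho_{n,3}$, and since the solution set of the homogeneous recurrence \eqref{eq:rec} is a linear space, $(\rho_{n,0})$ inherits \eqref{eq:rec} as well. This settles part~(a).

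For part~(b), set $D_n:=\rho_{n,0}\rho_{n+1,3}-\rho_{n+1,0}\rho_{n,3}$ and use part~(a): solving \eqref{eq:rec} for $\rho_{n+1,0}$ and $\rho_{n+1,3}$ and substituting, the middle-coefficient terms cancel, leaving
\[
D_n=\frac{2^{16}n^5}{(n+1)^5}\big(\rho_{n-1,0}\rho_{n,3}-\rho_{n,0}\rho_{n-1,3}\big)=\frac{2^{16}n^5}{(n+1)^5}\,D_{n-1}\qquad(n\ge1).
\]
Telescoping gives $D_n=2^{16n}(n+1)^{-5}D_0$, so it remains only to compute $D_0$. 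From Definition~\ref{def_R_n(t)} one has $R_0(t)=2/t^3$ and $R_1(t)=512\,(t+\tfrac12)^5/\big(t^4(t+1)^4\big)$; reading off the partial fractions (and using the empty-sum convention of Lemma~\ref{lem_linear_forms}) gives $\rho_{0,0}=0$, $\rho_{0,3}=768$ and $\rho_{1,0}=-1024$, whence $D_0=-\rho_{1,0}\rho_{0,3}=3\cdot2^{18}$. Therefore $D_n=3\cdot2^{16n+18}/(n+1)^5$, which is visibly nonzero.

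The main obstacle is the part~(a) step of passing from the linear form $S_n=\rho_{n,0}+\rho_{n,3}\zeta_2(5)$ to its individual coefficients: one must verify that the telescoping term $T_n(t+1)-T_n(t)$ is killed by \emph{both} functionals above. Annihilation by the coefficient-sum functional is automatic from re-indexing, whereas annihilation by $R\mapsto-\int_{\mathbb Z_2}R'(t+\tfrac12)\,\d t$ hinges on $T_n$ vanishing to order exceeding $2$ at $t=\tfrac12$, which is built into \eqref{def_T_n(t)} through the factor $(t-\tfrac12)_n^4$. The remaining work\,---\,the coefficient bookkeeping, the Casoratian recursion, and the evaluation of $D_0$\,---\,is routine.
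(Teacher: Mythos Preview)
Your proof is correct and follows essentially the same route as the paper: for part~(a) you extract the $(t+k)^{-3}$ partial-fraction coefficients and sum over $k$ to get the recursion for $\rho_{n,3}$, apply the Volkenborn-integral functional together with Lemma~\ref{lem_translation_formula} to obtain $-T_n''(\tfrac12)=0$ for $S_n$, and deduce the recursion for $\rho_{n,0}$ by linearity; for part~(b) you run the standard Casoratian recursion and evaluate $D_0$ from the initial data. Your explicit remark that the factor $(t-\tfrac12)_n^4$ forces $T_n''(\tfrac12)=0$ for $n\ge1$ makes transparent a step the paper leaves implicit, but otherwise the arguments coincide.
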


\begin{proof}
We first prove part (a). 
By \eqref{def_T_n(t)}, the function $T_n(t)$ is a rational function in $t$ with $\deg T_n = 0$, hence its partial-fraction decomposition assumes the form
\[
T_n(t) =: c_{n} + \sum_{i=1}^{4}\sum_{k=0}^n \frac{a_{n,i,k}}{(t+k)^i}
\]
for some $c_n \in \mathbb{Q}$ and $a_{n,i,k} \in \mathbb{Q}$.

By Equations \eqref{001}, \eqref{def_rik} and the uniqueness of partial-fraction decomposition, we obtain for $n \in \mathbb{Z}_{>0}$, any $k \in \{0,1,\dots,n+1\}$ and $i \in\{1,2,3,4\}$ that
\begin{equation}
(n+1)^5 r_{n+1,i,k} - 32(2n+1)(8n^4+16n^3+20n^2+12n+3) r_{n,i,k} + 2^{16}n^5 r_{n-1,i,k} = a_{n,i,k-1} - a_{n,i,k},
\label{421}
\end{equation}
where all the coefficients outside the eligible range vanish:
$a_{n,i,-1} = a_{n,i,n+1} =0$ and $r_{n,i,n+1} =r_{n-1,i,n} =r_{n-1,i,n+1} = 0$.
Taking $i=3$ in \eqref{421}, summing \eqref{421} over $k \in \{0,1,\dots,n+1\}$ and using \eqref{def_rho_3}, we obtain
\begin{equation}
\label{rec_for_rho_3}
(n+1)^5\rho_{n+1,3} - 32(2n+1)(8n^4+16n^3+20n^2+12n+3)\rho_{n,3} + 2^{16}n^5\rho_{n-1,3} = 0.
\end{equation}
In a different direction, using Definition \ref{def_Sn}, Equation \eqref{001}, Lemma \ref{lem_translation_formula} and Equation \eqref{def_T_n(t)}, we have
\begin{align}
&
(n+1)^5 S_{n+1} - 32(2n+1)(8n^4+16n^3+20n^2+12n+3) S_n + 2^{16}n^5 S_{n-1}
\notag\\
&\; =-\int_{\mathbb{Z}_2} \Big( (n+1)^5 R_{n+1}'\Big(t +\frac{1}{2} \Big) - 32(2n+1)(8n^4+16n^3+20n^2+12n+3) R_n'\Big(t +\frac{1}{2} \Big)
\notag\\ &\;\quad
+ 2^{16}n^5 R_{n-1}'\Big(t +\frac{1}{2} \Big)  \Big) \,\d t \notag\\
&\; = -\int_{\mathbb{Z}_2} \Big( T_n'\Big(t +\frac{3}{2} \Big) - T_n'\Big(t +\frac{1}{2} \Big) \Big) \,\d t
= -T_n''\Big( \frac{1}{2} \Big) 
= 0.
\label{rec_for_S_n}
\end{align}
By Lemma \ref{lem_linear_forms}, we have $\rho_{n,0} = S_n - \rho_{n,3}\cdot\zeta_2(5)$. 
Therefore, Equations \eqref{rec_for_rho_3} and \eqref{rec_for_S_n} imply that the sequence $(\rho_{n,0})_{n \ge 0}$ satisfies the recursion \eqref{eq:rec} as well.
This completes the proof of part~(a).

Now we prove part (b). 
For $n=0$, the determinant formula is checked by a straightforward computation: since $\rho_{0,0}=0$, $\rho_{0,3}=768$, $\rho_{1,0}=-1024$, $\rho_{1,3}=73728$, we deduce that
\[
\rho_{0,0}\rho_{1,3} - \rho_{1,0}\rho_{0,3} = 3 \cdot 2^{18}.
\]
For $n\ge 1$ it follows inductively by using the recursions from part~(a) that
\[
\rho_{n,0}\rho_{n+1,3} - \rho_{n+1,0}\rho_{n,3}
=2^{16} \frac{n^5}{(n+1)^5}(\rho_{n-1,0}\rho_{n,3} - \rho_{n,0}\rho_{n-1,3}).
\qedhere
\]
\end{proof}

\section{Arithmetic properties}
In this section, we will investigate the arithmetic properties of the coefficients $\rho_{n,i}$ of linear forms $S_n = \rho_{n,0} + \rho_{n,3} \cdot \zeta_2(5)$ as defined in Lemma \ref{lem_linear_forms}.

In what follows, $d_n$ stands for the least common multiple of $1,2,\dots,n$ where $n \in \mathbb{Z}_{>0}$. 

Note that building blocks of the rational function $R_n(t)$ are rational functions $2t+n$,
\[
F(t) = 2^{2n} \cdot \frac{(t+1/2)_n}{n!}
\quad\text{and}\quad
G(t) = \frac{n!}{(t)_{n+1}}.
\]
Applying a somewhat standard argument based on their properties (see \cite[Lemma 4.2]{Lai2025+b} and \cite[Lemma 16]{Zud2004}), we have $d_n^{4-i} \cdot r_{n,i,k} \in \mathbb{Z}$ for any $n \in \mathbb{Z}_{>0}$, $1 \le i \le 4$ and $0 \le k \le n$. 
Then, adapting the strategy of the proof of \cite[Lemma 2]{FSZ2019} (as in \cite[Lemma 4.6]{Lai2025}), it is elementary to establish the following inclusions.

\begin{lemma}
\label{lem_arith_rho}
For $n \in \mathbb{Z}_{>0}$,
\[ 
d_{n} \cdot \rho_{n,3} \in \mathbb{Z}
\quad\text{and}\quad
d_{n}^6 \cdot \rho_{n,0} \in \mathbb{Z}. 
\]
\end{lemma}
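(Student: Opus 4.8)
The plan is to read the partial‑fraction coefficients $r_{n,i,k}$ off the local expansion of $R_n$ at its poles and to exploit the arithmetic of the three building blocks $2t+n$, $F$, $G$. Every pole $t=-k$ ($0\le k\le n$) of $R_n$ has order $4$, so $r_{n,i,k}$ is the coefficient of $\varepsilon^{4-i}$ in the Taylor expansion about $\varepsilon=0$ of
\[
\Phi_{n,k}(\varepsilon):=(t+k)^4 R_n(t)\big|_{t=-k+\varepsilon}=(n-2k+2\varepsilon)\,F(-k+\varepsilon)^4\,\big(\varepsilon\,G(-k+\varepsilon)\big)^4 ,
\]
a power series over $\mathbb Q$ regular at the origin. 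First I would record the base values $\varepsilon G(-k+\varepsilon)\big|_{\varepsilon=0}=(-1)^k\binom nk\in\mathbb Z$ and, after cancelling the half‑integers, $F(-k)=(-1)^k\dfrac{(2k)!\,(2n-2k)!}{k!\,(n-k)!\,n!}\in\mathbb Z$, i.e.\ a signed super‑Catalan number.

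Next I would establish two ``local integrality'' statements. For $G$: since $\varepsilon G(-k+\varepsilon)=(-1)^k\binom nk\prod_{m}(1+\varepsilon/m)^{-1}$ with $m$ running over $\{-k,\dots,n-k\}\setminus\{0\}$, the coefficient of $\varepsilon^{j}$ is $(-1)^k\binom nk$ times an integer‑coefficient polynomial of degree $j$ in the $1/m$, $|m|\le n$; hence $d_n^{\,j}\cdot[\varepsilon^{j}]\big((\varepsilon G(-k+\varepsilon))^{4}\big)\in\mathbb Z$. For $F$: since $F(-k+\varepsilon)=F(-k)\prod_{j=0}^{n-1}\big(1+2\varepsilon/(2j+1-2k)\big)$, the coefficient of $\varepsilon^{j}$ is $F(-k)$ times an integer‑coefficient polynomial in the $2/(2j+1-2k)$, whose denominators are odd but can reach $2n-1$. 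The heart of the matter — and the main obstacle — is to show this excess is killed by the super‑Catalan factor: fixing an odd prime $p$ and $e:=v_p(d_n)$, at most one of the $|2j+1-2k|$ is divisible by $p^{e+1}$ and none by $p^{e+2}$; using the symmetry $F(-k)=(-1)^n F(-(n-k))$ to reduce to $2k\ge n$, a short Legendre/Kummer count then forces $2k>p^{e+1}$, and a digit comparison (equivalently, the integrality of $S(k\bmod p^{e},(n-k)\bmod p^{e})$) gives $p\mid F(-k)$. As $F$ enters to the fourth power, $F(-k)^{4}$ absorbs the excess, so $d_n^{\,j}\cdot[\varepsilon^{j}]\big(F(-k+\varepsilon)^{4}\big)\in\mathbb Z$ for $0\le j\le 3$; combined with the $G$‑estimate this gives $d_n^{\,j}\cdot[\varepsilon^{j}]\Phi_{n,k}\in\mathbb Z$. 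This is the ``somewhat standard argument'' of \cite[Lemma 16]{Zud2004} and \cite[Lemma 4.2]{Lai2025+b}.

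For $\rho_{n,3}=384\sum_k r_{n,3,k}$ only the coefficients $j=0,1$ occur, so the previous step yields $d_n\cdot r_{n,3,k}\in\mathbb Z$ for each $k$, whence $d_n\rho_{n,3}\in\mathbb Z$ (alternatively this follows from the integral binomial form of the solution of \eqref{eq:rec}). For $\rho_{n,0}$ a term‑by‑term estimate is not enough: in
\[
\rho_{n,0}=-\sum_{i=1}^{4} i(i+1)\sum_{k=0}^{n} r_{n,i,k}\sum_{\ell=1}^{k}\frac{2^{i+2}}{(2\ell-1)^{i+2}}
\]
the denominators $(2\ell-1)^{i+2}$ with $i+2\le 6$ are exactly what fixes the exponent $6$, but a single prime power $p^{e+1}\le 2n-1$ can make an individual $(i,k,\ell)$‑summand miss $d_n^{6}$‑integrality by one factor of $p$. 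The remedy, following the strategy of \cite[Lemma 2]{FSZ2019} (cf.\ \cite[Lemma 4.6]{Lai2025}), is not to split $\rho_{n,0}$ into individual summands but to keep contributions with a common denominator grouped, so that the Ap\'ery‑type cancellations between different $i$ at the same $k$ become visible; together with the local integrality of $\Phi_{n,k}$ and the symmetry relations among the $r_{n,i,k}$ (in particular $\sum_k r_{n,i,k}=0$ for even $i$, coming from $R_n(-t-n)=-R_n(t)$) this yields $d_n^{6}\rho_{n,0}\in\mathbb Z$. The two places demanding genuine care are the super‑Catalan divisibility in the second step and the grouping that exhibits the cancellation in $\rho_{n,0}$.
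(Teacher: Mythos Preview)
Your plan is correct and follows precisely the route the paper indicates: factor $R_n$ into the building blocks $2t+n$, $F$, $G$, control the Taylor coefficients of $F^4$ and $(\varepsilon G)^4$ at $\varepsilon=0$ via the ``standard'' lemmas of \cite[Lemma~16]{Zud2004} and \cite[Lemma~4.2]{Lai2025+b} (the super-Catalan divisibility you isolate is exactly the point of the latter), and then handle $\rho_{n,0}$ by the grouping argument of \cite[Lemma~2]{FSZ2019}/\cite[Lemma~4.6]{Lai2025}. The two delicate spots you flag---the $p\mid F(-k)$ step and the cancellation needed for $\rho_{n,0}$---are the same ones the paper defers to the cited references, and your identification of the symmetry $R_n(-t-n)=-R_n(t)$ and the role of $F(-k)^4$ in absorbing the at-most-$3$ excess factors of $p$ is accurate.
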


The result of the lemma is weaker than the expectation
\begin{equation}
\rho_{n,3} \in \mathbb{Z}
\quad\text{and}\quad
d_{n}^5 \cdot \rho_{n,0} \in \mathbb{Z}
\quad\text{for all}\; n\in\mathbb Z_{>0},
\label{den-con}
\end{equation}
based on a numerical check for $n\le 5000$ (using the recurrence equation \eqref{eq:rec}) and on a `usual' behaviour of the coefficients of related linear forms in (Archimedean) zeta values.
Such expectations are dubbed `denominator conjectures' in this context, and many of them were systematically established by Krattenthaler and Rivoal in \cite{KR2007}.
Modifying their methodology, while still making use of the famous Andrews transformation \cite{And1975} of terminating very-well-poised hypergeometric series into a multiple hypergeometric sum (as $q\to1$), we demonstrate below that indeed $\rho_{n,3} \in \mathbb{Z}$ for $n\in\mathbb Z_{>0}$ and a slightly weaker version of the companion inclusions in~\eqref{den-con}.
The latter weakness does not affect the asymptotic behaviour of the denominators of $\rho_{n,0}$ as $n\to\infty$, so that our arithmetic findings are as good as required when it comes to their application for Theorem~\ref{main}.

\begin{lemma}
\label{lem:rho0}
The solution $(\rho_n)_{n\ge0}$ of the difference equation \eqref{eq:rec} with the initial conditions $\rho_0=1$, $\rho_1=96$ is given by the binomial double sum
\[
\rho_n=\sum_{0\le i\le k\le n}2^{4(n-k)}{\binom{2i}i}^2\binom{2n-2i}{n-i}\binom{2k-2i}{k-i}{\binom{2k}k}^2\binom{2n-2k}{n-k}.
\]
Furthermore, for the coefficients $\rho_{n,3}$ given in \eqref{def_rho_3} we have $\rho_{n,3}=768\rho_n\in\mathbb Z$ for $n\in\mathbb Z_{\ge0}$.
\end{lemma}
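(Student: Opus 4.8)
The plan is to reduce the whole statement to a single closed-form identity and then to obtain that identity from a very-well-poised hypergeometric transformation. Write $\beta_n$ for the binomial double sum in the statement; since each of its summands is a product of binomial coefficients and a power of $2$, we have $\beta_n\in\mathbb Z$ for all $n\ge0$. By Lemma~\ref{lem_rec}(a) the sequence $(\rho_{n,3})_{n\ge0}$ satisfies~\eqref{eq:rec}, and the proof of Lemma~\ref{lem_rec}(b) records $\rho_{0,3}=768$ and $\rho_{1,3}=73728=768\cdot 96$. Hence, once the identity $\rho_{n,3}=768\,\beta_n$ is established for all $n\ge0$, the sequence $(\beta_n)=(\rho_{n,3}/768)$ satisfies~\eqref{eq:rec} with $\beta_0=1$ and $\beta_1=96$; by the uniqueness of the solution of~\eqref{eq:rec} with given values at $n=0,1$ this forces $\beta_n=\rho_n$, and then $\rho_{n,3}=768\,\rho_n\in\mathbb Z$ is exactly the remaining assertion. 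So the whole lemma reduces to proving $\rho_{n,3}=768\,\beta_n$.

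To establish this I would first make $\rho_{n,3}=384\sum_{k=0}^n r_{n,3,k}$ explicit. Starting from $(t)_{n+1}^4=\prod_{j=0}^n(t+j)^4$ and~\eqref{def_rik}, a short computation with logarithmic derivatives gives
\[
r_{n,4,k}=(n-2k)\binom{2k}{k}^4\binom{2n-2k}{n-k}^4,\qquad
r_{n,3,k}=r_{n,4,k}\Bigl(\tfrac{2}{n-2k}+8(O_{n-k}-O_k)-4(H_{n-k}-H_k)\Bigr),
\]
where $H_m=\sum_{j=1}^m 1/j$ and $O_m=\sum_{j=1}^m 1/(2j-1)$ (the odd harmonic numbers $O_m$ being forced by the half-integer shift built into $R_n$, and the apparent indeterminacy at $k=n/2$ being removed by the obvious cancellation). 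Summing over $k$, the $2/(n-2k)$ term contributes $2\sum_{k=0}^n\binom{2k}{k}^4\binom{2n-2k}{n-k}^4$, and what is left is a harmonic-weighted finite sum of fourth powers of central binomial coefficients; the goal becomes to show that this whole sum equals $2\,\beta_n$.

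The heart of the argument is to recognise the harmonic-weighted sum as a parameter-derivative of a terminating very-well-poised hypergeometric series --- each $O_m$ and $H_m$ is a difference of digamma values, so these weights arise by differentiating a Pochhammer parameter at $0$ --- and then to feed that series into the Andrews transformation~\cite{And1975} (in its $q$-analogue, with $q\to1$), which rewrites a terminating very-well-poised series as a multiple hypergeometric sum. Differentiating the resulting multiple sum in the auxiliary parameter and letting $q\to1$, the well-poised parameters should yield exactly the factors $\binom{2i}{i}$, $\binom{2k}{k}$ together with their complements $\binom{2n-2i}{n-i}$, $\binom{2k-2i}{k-i}$, $\binom{2n-2k}{n-k}$, the powers of $2$ should collapse into $2^{4(n-k)}$, the auxiliary variables of the Andrews sum should become the index $i$ and the inner summation variable, and the harmonic weights should cancel; matching the overall constant then yields $\rho_{n,3}=768\,\beta_n$.

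I expect the decisive obstacle to be exactly this middle step: choosing the numerator and denominator parameters of the terminating very-well-poised series so that the appropriate derivative reproduces the $O_m,H_m$-weighted sum above with the correct coefficients, confirming that the terminating Andrews transformation applies to it, and then carrying out the $q\to1$ degeneration carefully enough to land on the exact double sum in the statement rather than a superficially different but equivalent reshuffling of it. A more pedestrian route, which avoids the hypergeometric machinery but sheds no light on where the formula comes from, would be to verify by bivariate creative telescoping that $\beta_n$ itself satisfies~\eqref{eq:rec} --- together with the boundary analysis along the edges of the triangle $0\le i\le k\le n$ --- to check $\beta_0=1$ and $\beta_1=96$ directly, and then to conclude $\rho_{n,3}=768\,\beta_n$ via the uniqueness argument of the first paragraph.
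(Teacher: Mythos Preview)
Your outline is on the right track and coincides with the paper's strategy at the conceptual level: both arguments identify $\sum_k r_{n,3,k}$ with a parameter-derivative of a terminating very-well-poised series and then feed that series into the Andrews transformation to land on the binomial double sum. Your explicit formulas for $r_{n,4,k}$ and $r_{n,3,k}$ are correct, and your reduction in the first paragraph is exactly how the paper organises the logic as well.

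Where the paper's execution is cleaner than your sketch is in how the auxiliary parameter is introduced. Rather than first expanding $r_{n,3,k}$ into the $H_m,O_m$-weighted sum and then trying to reverse-engineer a Pochhammer parameter whose derivative reproduces those weights, the paper deforms the rational function itself: it sets
\[
R_n(t;\eps)=2^{8n}(2t+n+\eps)\,\frac{(t+\tfrac12)_n^2\,(t+\eps+\tfrac12)_n^2}{(t)_{n+1}^2\,(t+\eps)_{n+1}^2},
\]
observes the pointwise identity $\frac{\d}{\d t}\bigl(R_n(t)(t+k)^4\bigr)=2\,\frac{\partial}{\partial\eps}\bigl(R_n(t;\eps)(t+k)^2(t+\eps+k)^2\bigr)\big|_{\eps=0}$, and hence writes $\rho_n$ directly as $\partial/\partial\eps$ at $\eps=0$ of an explicit ${}_9V_8$ with parameters $(-n-\eps;\,-n-\eps,\tfrac12,\tfrac12,\tfrac12-\eps,\tfrac12-\eps,-n,-n)$. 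The Andrews transformation (in the $q=1$ form of Krattenthaler--Rivoal, so no $q\to1$ limit is needed) then produces a double sum that is manifestly $\eps$ times an integer-coefficient expression plus $O(\eps^2)$; reading off the $\eps$-coefficient gives the claimed $\beta_n$ with no harmonic cancellations to track. This bypasses precisely the ``decisive obstacle'' you flagged: the correct very-well-poised parameters are handed to you by the $\eps$-deformation rather than having to be guessed from the $O_m,H_m$ pattern.

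Your fallback via bivariate creative telescoping is also mentioned in the paper as a valid (if unilluminating) alternative.
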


\begin{proof}
Though one can easily verify that the double sum indeed satisfies the recursion \eqref{eq:rec} with the help of multi-sum algorithms of creative telescoping, we give a human proof below, which also indicates how we arrived at the explicit expression for $\rho_n$.

We define our sequence $(\rho_n)_{n\ge0}$ via the formula
\[
\rho_n:=\frac12\sum_{k=0}^nr_{n,3,k}=\frac12\sum_{k\in\mathbb Z}r_{n,3,k},
\]
where the coefficients $r_{n,3,k}$ in the partial-fraction decomposition \eqref{def_rik} are given by
\[
r_{n,3,k}=\frac{\d}{\d t}\big(R_n(t)(t+k)^4\big)\Big|_{t=-k} \quad\text{for}\; k=0,1,\dots,n.
\]
The inspection of this formula for $n=0$ and $1$ reveals that $\rho_0=1$ and $\rho_1=96$, while the recursion \eqref{eq:rec} for the sequence follows from $\rho_n=\rho_{n,3}/768$ (as shown in Lemma~\ref{lem_linear_forms}) and Lemma~\ref{lem_rec}.
Now consider the $\eps$-deformation
\[
R_n(t;\eps)=2^{8n}(2t+n+\eps)\,\frac{(t+1/2)_n^2(t+\eps+1/2)_n^2}{(t)_{n+1}^2(t+\eps)_{n+1}^2}
\]
of the function $R_n(t)$ with the motive that
\[
\frac{\d}{\d t}\big(R_n(t)(t+k)^4\big)
=2\frac{\p}{\p\eps}\big(R_n(t;\eps)(t+k)^2(t+\eps+k)^2\big)\bigg|_{\eps=0},
\]
so that
\begin{align*}
\rho_n
&=\frac12\sum_{k=0}^n\frac{\d}{\d t}\big(R_n(t)(t+k)^4\big)\bigg|_{t=-k}
\\
&=\sum_{k=0}^n\frac{\p}{\p\eps}\big(R_n(t;\eps)(t+k)^2(t+\eps+k)^2\big)\bigg|_{\eps=0,\;t=-k}
\\
&=\sum_{k=0}^n\frac{\p}{\p\eps}\big(R_n(t-k;\eps)t^2(t+\eps)^2\big)\bigg|_{\eps=0,\;t=0}
\\
&=2^{8n}\frac{\p}{\p\eps}\bigg(\sum_{k=0}^n
(n+\eps-2k)\,\frac{(1/2-k)_n^2(\eps+1/2-k)_n^2}{k!^2(n-k)!^2(1-\eps)_k^2(1+\eps)_{n-k}^2}
\bigg)\bigg|_{\eps=0}
\\
&=2^{8n}\frac{\p}{\p\eps}\bigg((n+\eps)\frac{(\frac12)_n^2(\frac12+\eps)_n^2}{n!^2(1+\eps)_n^2}
\cdot
{}_9V_8(-n-\eps; \, -n-\eps, \, \tfrac12, \, \tfrac12, \, \tfrac12-\eps, \, \tfrac12-\eps, \, -n, \, -n)\bigg)\bigg|_{\eps=0},
\end{align*}
where the notation
\begin{equation}\label{eq:def-very-well-poised}
{}_{m+2}V_{m+1}(a_0;a_1,\dots,a_m)
=\sum_{k=0}^\infty\frac{(a_0+2k)\prod_{j=0}^m(a_j)_k}{a_0\,k!\prod_{j=1}^m(1+a_0-a_j)_k}
\end{equation}
is used for the very-well-poised hypergeometric series evaluated at~$1$.

Taking $m=2$ and $a=-n-\eps$, $b_1=\frac12$, $c_1=-n-\eps$, $b_2=\frac12-\eps$, $c_2=-n$, $b_3=\frac12$, $c_3=\frac12-\eps$ in \cite[Th\'eor\`eme~8]{KR2007} we obtain the following double sum expression:
\begin{align*}
&
(n+\eps)\frac{(\frac12)_n^2(\frac12+\eps)_n^2}{n!^2(1+\eps)_n^2}
\cdot{}_9V_8(-n-\eps; \, -n-\eps, \, \tfrac12, \, \tfrac12, \, \tfrac12-\eps, \, \tfrac12-\eps, \, -n, \, -n)
\\ &\quad
=(n+\eps)\frac{(\frac12)_n^2(\frac12+\eps)_n^2}{n!^2(1+\eps)_n^2}
\cdot\frac{(-\eps)\cdot(1-n-\eps)_{n-1}(-n)_n}{(\frac12-n-\eps)_n(\frac12-n)_n}
\\ &\quad\quad
\times
\sum_{0\le i\le k\le n}\frac{(\frac12)_i(\frac12-\eps)_i(-n)_i}{i!^2(\frac12-n-\eps)_i}
\cdot
\frac{(\frac12)_{k-i}(\frac12)_k(\frac12-\eps)_k(-n)_k}{(k-i)!k!(1-\eps)_k(\frac12-n)_k}
\displaybreak[2]\\ &\quad
=\eps\cdot\frac{(\frac12)_n^4}{n!^2(\frac12-n)_n^2}
\sum_{0\le i\le k\le n}\frac{(\frac12)_i^2(-n)_i}{i!^2(\frac12-n)_i}
\cdot
\frac{(\frac12)_{k-i}(\frac12)_k^2(-n)_k}{(k-i)!k!^2(\frac12-n)_k}
+O(\eps^2)
\displaybreak[2]\\ &\quad
=\eps\sum_{0\le i\le k\le n}\frac{(\frac12)_i^2(\frac12)_{n-i}}{i!^2(n-i)!}
\cdot
\frac{(\frac12)_{k-i}(\frac12)_k^2(\frac12)_{n-k}}{(k-i)!k!^2(n-k)!}
+O(\eps^2)
\quad\text{as}\; \eps\to0.
\end{align*}
This implies that
\[
\rho_n=2^{8n}\sum_{0\le i\le k\le n}\frac{(\frac12)_i^2(\frac12)_{n-i}}{i!^2(n-i)!}
\cdot
\frac{(\frac12)_{k-i}(\frac12)_k^2(\frac12)_{n-k}}{(k-i)!k!^2(n-k)!};
\]
finally, one uses $(\frac12)_i/i!=2^{-2i}\binom{2i}{i}$ multiple times to arrive at the formula for $\rho_n$ claimed.
\end{proof}

We now focus on the sequence $(\rho_{n,0})_{n\ge0}$ defined in \eqref{def_rho_0} and write it as
\begin{equation}
\rho_{n,0} = -\sum_{\ell=1}^n \bigg(\sum_{k=\ell}^{n} \sum_{i=1}^{4} \frac{i(i+1)r_{n,i,k}}{(\ell-\frac12)^{i+2}} \bigg).
\label{def_rho_0-new}
\end{equation}

\begin{lemma}
\label{lem-p-est}
For any prime $p>\max\{\sqrt{2n},3\}$, we have
\[
v_p(\rho_{n,0}) \ge -5.
\]
\end{lemma}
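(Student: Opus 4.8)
The plan is to bound the $p$-adic valuation of each inner summand in \eqref{def_rho_0-new} separately and then use the ultrametric inequality. Fix a prime $p > \max\{\sqrt{2n},3\}$. For $1 \le \ell \le n$ the factor $(\ell - \tfrac12)^{-(i+2)} = 2^{i+2}(2\ell-1)^{-(i+2)}$ contributes valuation $-(i+2)v_p(2\ell-1)$ (since $p$ is odd), and because $1 \le 2\ell-1 \le 2n-1 < p^2$ we have $v_p(2\ell-1) \le 1$. Thus each such factor contributes at least $-(i+2) \ge -6$ when $i=4$; but we want $-5$, so one cannot be this crude for $i=4$ and must pair the denominator contribution against a compensating $p$ in $r_{n,4,k}$.

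The key point is therefore a valuation estimate for the partial-fraction coefficients $r_{n,i,k}$ themselves. First I would record that $r_{n,i,k} = \frac{1}{(4-i)!}\frac{\d^{4-i}}{\d t^{4-i}}\bigl(R_n(t)(t+k)^4\bigr)\big|_{t=-k}$, and analyse $R_n(t)(t+k)^4 = 2^{8n}(2t+n)(t+1/2)_n^4/\prod_{j\ne k}(t+j)^4$ at $t=-k$. Writing $(t+1/2)_n = \prod_{m=0}^{n-1}(t+m+\tfrac12)$, at $t=-k$ this equals $\prod_{m=0}^{n-1}(m-k+\tfrac12) = 2^{-n}\prod_{m=0}^{n-1}(2m-2k+1)$, a product of $n$ odd numbers each of absolute value at most $2n-1 < p^2$, so $v_p$ of the whole product is at most $\lfloor n/p\rfloor + \lfloor n/p^2 \rfloor + \cdots \le n/(p-1) < 1$ when $p > \sqrt{2n}$ forces $p \ge$ roughly $\sqrt{2n}$... more carefully, since each factor is $< p^2$ and there are $n < p^2/2$ of them, at most one factor is divisible by $p^2$ is impossible (that would need the factor $\ge p^2 > 2n-1$), so $v_p((t+1/2)_n|_{t=-k}) = \#\{m : p \mid 2m-2k+1\} \le \lceil n/p \rceil$; this can be $\ge 1$. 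The fourth power then gives up to $4\lceil n/p\rceil$ in the numerator, while the denominator $\prod_{j\ne k}(t+j)^4|_{t=-k} = \prod_{j\ne k}(j-k)^4$ contributes $-4 v_p\bigl(\prod_{j\ne k, 0\le j\le n}(j-k)\bigr)$, and the $2t+n$ and $2^{8n}$ factors are $p$-adic units. The upshot is a clean bound of the form $v_p(r_{n,i,k}) \ge -C$ for a small constant, plus an improvement that detects when $v_p(2\ell-1)=1$.

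The clean way to combine everything: I would show $v_p\bigl(i(i+1)r_{n,i,k}(\ell-\tfrac12)^{-(i+2)}\bigr) \ge -5$ for every $(i,k,\ell)$ in range, by a case split on whether $p \mid 2\ell-1$. If $p \nmid 2\ell-1$ the denominator is a unit and one only needs $v_p(r_{n,i,k}) \ge -5$, which follows from the Pochhammer/factorial analysis above together with $\sqrt{2n}<p$ (which limits each relevant product to valuation $1$ and kills the cross terms). If $p \mid 2\ell-1$, then $\ell$ is (up to the obvious congruence) pinned down, and one must exhibit a compensating factor of $p^{i+2-5}=p^{i-3}$ inside the relevant piece of the sum over $k$; for $i=4$ this is a single factor of $p$, which should come from the factor $2\ell-1$ reappearing (via the relation $2(-k)+1 \equiv -(2k-1)$ and the structure of $(t+1/2)_n$) in $r_{n,4,k}$ — i.e.\ the same prime that sits in the denominator also divides the numerator Pochhammer symbol, because $\ell - \tfrac12$ is one of the shifts $t+m+\tfrac12$. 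The main obstacle I anticipate is precisely making this cancellation between the $(\ell-\tfrac12)^{-(i+2)}$ denominators and the $r_{n,i,k}$ numerators fully rigorous and uniform in $i$: one has to track which translates of $(t+1/2)_n^4$ survive differentiation in the formula for $r_{n,i,k}$ and argue the surviving ones still carry the needed power of $p$; I would handle this by grouping the inner sum $\sum_{k=\ell}^n$ and showing the combination $\sum_k i(i+1)r_{n,i,k}$ itself — rather than each term — has valuation $\ge -(5-(i+2))$ off the exceptional $\ell$, exploiting that the building blocks $F(t),G(t)$ of $R_n$ have controlled $p$-integrality as in the cited lemmas \cite[Lemma 4.2]{Lai2025+b}, \cite[Lemma 16]{Zud2004}. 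Once the termwise (or grouped) bound $\ge -5$ holds, the ultrametric inequality applied to \eqref{def_rho_0-new} gives $v_p(\rho_{n,0}) \ge -5$ immediately.
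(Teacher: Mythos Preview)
Your proposal has a genuine gap at precisely the point you flag as ``the main obstacle''. You correctly reduce the question to gaining one extra power of $p$ whenever $p\mid 2\ell-1$, and you suggest that this power comes from the factor $\tfrac12-\ell$ sitting inside $(t+\tfrac12)_n$ at $t=-k$. But that factor is already fully consumed in making $r_{n,4,k}$ an integer: from $(\tfrac12-k)_n=(-1)^k(\tfrac12)_k(\tfrac12)_{n-k}$ one has
\[
r_{n,4,k}=(n-2k)\binom{2k}{k}^4\binom{2(n-k)}{n-k}^4,
\]
and the termwise bound you need, $v_p(r_{n,4,k})\ge 1$ for all $k\ge\ell$, is simply false. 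A concrete counterexample: $p=5$, $n=12$, $\ell=3$ (so $p>\sqrt{2n}$ and $p\mid 2\ell-1$), $k=5$; then $r_{12,4,5}=2\cdot\binom{10}{5}^4\binom{14}{7}^4=2\cdot 252^4\cdot 3432^4$ has $v_5=0$, and the single summand $20\,r_{12,4,5}/(\ell-\tfrac12)^6$ already has $v_5=-6$. So neither the termwise bound nor the heuristic ``the same prime reappears in the numerator'' survives. Your fallback of ``grouping the inner sum over $k$'' is exactly what must be done, but you give no mechanism for it; the standard $p$-integrality lemmas for $F(t)$ and $G(t)$ you cite only deliver the $-6$ bound (that is, $d_n^6\rho_{n,0}\in\mathbb Z$), never $-5$.

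What the paper does is structurally different. For each fixed $\ell$ it rewrites
\[
\sum_{k=\ell}^{n}\sum_{i=1}^{4}\frac{i(i+1)r_{n,i,k}}{(\ell-\tfrac12)^{i+2}}
=\frac{1}{3}\,\frac{\d^3}{\d\eps^3}T_{n,\ell}(\eps)\Big|_{\eps=0}
\]
as a third $\eps$-derivative of a single deformed sum $T_{n,\ell}(\eps)$, recognises $T_{n,\ell}(\eps)$ as a terminating very-well-poised ${}_{13}V_{12}$, and applies the Andrews multi-sum transformation to turn it into a quadruple sum $\sum F_{i_1,i_2,i_3,i_4}(\eps)$ whose value at $\eps=0$ factors as an explicit product of central binomial coefficients. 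The extra power of $p$ then comes from a short case analysis on residues modulo $p$: at most two factors in the denominator of $F|_{\eps=0}$ can carry a $p$, and whenever both do, one of the many binomial factors $\binom{2m}{m}$ in the numerator is forced to be divisible by $p$ as well; an induction in the style of \cite[Lemma~17]{Zud2004} then propagates this to the third derivative. The Andrews step is the missing idea: it is what repackages the $k$-sum so that the compensating $p$ becomes visible as a single binomial coefficient rather than being smeared across many $r_{n,i,k}$.
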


\begin{proof}
Since
\begin{align*}
\frac12\sum_{i=1}^{4} \frac{i(i+1)r_{n,i,k}}{(\ell-\frac12)^{i+2}}
&=-\frac{1}{3!} \, \frac{\d^3}{\d t^3} \bigg( R_n(t)(t+k)^4 \cdot \frac{1}{(t+k-\ell+\frac12)^3} \bigg)\bigg|_{t=-k} \\
&=-\frac{1}{3!} \, \frac{\d^3}{\d\eps^3} \bigg( R_n(-k+\eps)\eps^4 \cdot \frac{1}{(\eps-\ell+\frac12)^3} \bigg)\bigg|_{\eps=0}
\displaybreak[2]\\
&=\frac{1}{3!} \, \frac{\d^3}{\d\eps^3} \bigg( 2^{8n} (n-2k+2\eps)(\ell-\tfrac12-\eps) 
\\ &\qquad\qquad\times
\frac{(\ell+\frac12-\eps)_{k-\ell}^4 (\frac12-\eps)_{\ell-1}^4 (\frac12+\eps)_{n-k}^4}{(1-\eps)_k^4(1+\eps)_{n-k}^4} \bigg)\bigg|_{\eps=0},
\end{align*}
we obtain
\[
\sum_{k=\ell}^{n} \sum_{i=1}^{4} \frac{i(i+1)r_{n,i,k}}{(\ell-\frac12)^{i+2}}
=\frac13\,\frac{\d^3}{\d\eps^3}T_{n,\ell}(\eps)\bigg|_{\eps=0},
\]
where
\[
T_{n,\ell}(\eps):=2^{8n}(\ell-\tfrac12-\eps) (\tfrac12-\eps)_{\ell-1}^4 \sum_{k=\ell}^n (n-2k+2\eps) \frac{(\ell+\frac12-\eps)_{k-\ell}^4  (\frac12+\eps)_{n-k}^4}{(1-\eps)_k^4(1+\eps)_{n-k}^4}.
\]
Using the argument in the proof of \cite[Chap.~9, Corollaire~1]{KR2007} we recognise the sum as a limiting case of very-well-poised hypergeometric series,
\begin{align*}
T_{n,\ell}(\eps)
&= 2^{8n}(\ell-\tfrac12-\eps) (\tfrac12-\eps)_{\ell-1}^4 (n-2\ell+2\eps) \frac{(\frac12+\eps)_{n-\ell}^4}{(1-\eps)_{\ell}^4 (1+\eps)_{n-\ell}^4}
\\ &\quad
\times \lim_{\delta \to 0} {}_{13}V_{12}(a; \, b_1, \, c_1, \, \dots, \, b_5, \, c_5(\delta), \, -N),
\end{align*}
where we use the notation ${}_{13}V_{12}(\dots)$ for the very-well-poised hypergeometric series introduced in \eqref{eq:def-very-well-poised} with the parameters
\begin{gather*}
	N = n-\ell, \quad
	a = -n+2\ell -2\eps, \quad
	b_1=\cdots=b_4 = -n+\ell-\eps, \\
	c_1=\cdots=c_4 = \ell+\tfrac{1}{2}-\eps, \quad
	b_5 = 1, \quad
	c_5(\delta) = \ell+1-2\eps - \delta.
\end{gather*}
Note that the limit $\delta \to 0$ assures that the resulting sum is finite. Applying the Andrews transformation \cite[Th\'eorème 8]{KR2007} leads to the quadruple-sum expression
\begin{align*}
T_{n,\ell}(\eps) =\sum_{0 \le i_1 \le \dots \le i_4 \le n-\ell} F_{i_1,i_2,i_3,i_4}(\eps),
\end{align*}
where
\begin{align*}
F_{i_1,i_2,i_3,i_4}(\eps)
&=2^{2n}  \frac{(\frac12-\eps)_{\ell+i_1}(\frac12+\eps)_{n-\ell-i_1}}{(1-\eps)_{\ell+i_1}(1+\eps)_{n-\ell-i_1}} \\ &\quad
\times2^{2n} \frac{(\ell+\frac12-\eps)_{i_2}(\frac12-\eps)_{\ell-1}(\frac12+\eps)_{n-\ell-i_2}}{(1-\eps)_{\ell+i_2}(1+\eps)_{n-\ell-i_2}} \\ &\quad
\times2^{2n} \frac{(\ell+\frac12-\eps)_{i_3}(\frac12-\eps)_{\ell-1}(\frac12+\eps)_{n-\ell-i_3}}{(1-\eps)_{\ell+i_3}(1+\eps)_{n-\ell-i_3}} \\ &\quad
\times (-n+\ell-1)\binom{2i_1}{i_1} \binom{2i_2-2i_1}{i_2-i_1}\binom{2i_3-2i_2}{i_3-i_2}\binom{2i_4-2i_3}{i_4-i_3}
\\ &\quad
\times \frac{\ell-2\eps}{i_4+1} \cdot \frac{(\ell+1-2\eps)_{i_4}}{(\ell+1-\eps)_{i_4}}
\cdot 2^{2\ell} \frac{(\frac12-\eps)_{\ell-1}}{(1-\eps)_{\ell}}
\\ &\quad
\times 2^{2n-2\ell-2i_4} \frac{(\frac12+\eps)_{n-\ell-i_4}}{(1+\eps)_{n-\ell-i_4}}
\cdot\frac{(n-\ell+1-i_4)_{i_4}}{(n-\ell+1-i_4+\eps)_{i_4}}.
\end{align*}

We now fix a prime $p>\max\{\sqrt{2n},3\}$. Using the formula $2^{2i} \left(\frac{1}{2}\right)_i/i!=\binom{2i}{i}$ repeatedly, observe that
\begin{align*}
F_{i_1,i_2,i_3,i_4}(\eps) |_{\eps = 0}
&=-\frac{16}{(2\ell-1)^2} \, \frac{n-\ell+1}{i_4+1}
\binom{2(\ell+i_1)}{\ell+i_1}\binom{2(n-\ell-i_1)}{n-\ell-i_1}
\\ &\quad
\times \binom{2(\ell+i_2)}{\ell+i_2}\binom{2(n-\ell-i_2)}{n-\ell-i_2}
\binom{2(\ell+i_3)}{\ell+i_3}\binom{2(n-\ell-i_3)}{n-\ell-i_3}
\\ &\quad
\times \binom{2i_1}{i_1}\binom{2(i_2-i_1)}{i_2-i_1}\binom{2(i_3-i_2)}{i_3-i_2} \binom{2(i_4-i_3)}{i_4-i_3}
\\ &\quad
\times \binom{2(\ell-1)}{\ell-1}\binom{2(n-\ell-i_4)}{n-\ell-i_4}.
\end{align*}
Keep in mind that $p>\sqrt{2n}$ implies, for $\ell,i_4\le n$, the estimates $v_p(2\ell -1)\le 1$ and $v_p(i_4+1)\le 1$, so that for trivial reasons we always have
\[
v_p\big(  F_{i_1,i_2,i_3,i_4}(\eps) |_{\eps = 0} \big) \ge -3.
\]

We claim that, for any $i_1,\dots,i_4$ such that $0 \le i_1 \le \dots \le i_4 \le n-\ell$, we have the stronger statement
\begin{equation}
\label{claim}
v_p\big(  F_{i_1,i_2,i_3,i_4}(\eps) |_{\eps = 0} \big) \ge -2.
\end{equation}
If $p \nmid (2\ell-1)$ or $p \nmid (i_4+1)$, then \eqref{claim} clearly holds; therefore, in the sequel we assume that
\[
p \mid (2\ell-1) \quad\text{and}\quad  p \mid (i_4+1).
\] 

Note that for any non-negative integer $m  \le n$, we have
\[
p \mid \binom{2m}{m} \iff (m \bmod p) \ge \frac{p+1}{2},
\]
where $(m \bmod p)$ is the least non-negative residue of $m$ modulo $p$.

If $(i_1 \bmod p) \ge \frac{p+1}{2}$, then we have $p \mid \binom{2i_1}{i_1}$, hence Claim~\eqref{claim} is true. 
If $(i_1 \bmod p) \le \frac{p-3}{2}$, then $p \mid \binom{2(\ell+i_1)}{\ell+i_1}$, validating Claim \eqref{claim} again. 
In the remaining situation, we have
\begin{align*}
i_1 \equiv \frac{p-1}{2} \pmod{p}, \quad
\ell \equiv \frac{p+1}{2} \pmod{p} \quad\text{and}\quad
i_4 \equiv p-1 \pmod{p}.
\end{align*}
Now, if $(n \bmod p) > \frac{p-1}{2}$, then $p \mid \binom{2(n-\ell-i_1)}{n-\ell-i_1}$, so that Claim~\eqref{claim} is true. 
If $(n \bmod p) = \frac{p-1}{2}$, then $p \mid (n-\ell+1)$, again validating the claim. 
Finally, if $(n \bmod p) < \frac{p-1}{2}$, then $p \mid \binom{2(n-\ell-i_4)}{n-\ell-i_4}$, so that Claim~\eqref{claim} holds true. 
This completes the verification of~\eqref{claim}.

Using the induction argument as in \cite[Lemma 17]{Zud2004}, we deduce from \eqref{claim} that
\[
v_p\bigg( \frac{\d^{\lambda}}{\d\eps^{\lambda}} F_{i_1,i_2,i_3,i_4}(\eps) \bigg|_{\eps = 0} \bigg) \ge -2-\lambda \quad \text{for all}\; \lambda \in \mathbb{Z}_{\ge 0}.
\]
Taking $\lambda = 3$ we conclude that the $p$-adic order of
\[
\sum_{k=\ell}^{n}\sum_{i=1}^{4}\frac{i(i+1)r_{n,i,k}}{(\ell -\frac{1}{2})^{i+2}}
=\frac13\sum_{0 \le i_1 \le \dots \le i_4 \le n-\ell} \frac{\d^3}{\d\eps^3}F_{i_1,i_2,i_3,i_4}(\eps)\bigg|_{\eps = 0}
\]
is at least $-5$ for any $\ell\in\{1,2,\dots,n\}$.
Finally, application of Equation \eqref{def_rho_0-new} completes the proof for the $p$-adic order of $\rho_{n,0}$.
\end{proof}

\begin{lemma}
\label{almost_GDC}
For any $n \in \mathbb{Z}_{>0}$, we have
\[ 
\Phi_n^{-1} d_n^6 \cdot \rho_{n,0} \in \mathbb{Z}, 
\]
where $\Phi_n$ denotes the following product over primes:
\[
\Phi_n = \prod_{\max\{\sqrt{2n},3\}<p \le n} p. 
\]
\end{lemma}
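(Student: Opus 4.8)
The plan is to combine the two previously established arithmetic facts, namely Lemma~\ref{lem_arith_rho} (which gives $d_n^6\cdot\rho_{n,0}\in\mathbb Z$) and Lemma~\ref{lem-p-est} (which controls the $p$-adic order at the ``large'' primes), and to verify that the quantity $\Phi_n^{-1}d_n^6\cdot\rho_{n,0}$ has non-negative $p$-adic order at \emph{every} prime $p$. Since this quantity is manifestly rational, showing $v_p(\Phi_n^{-1}d_n^6\rho_{n,0})\ge 0$ for all primes $p$ is equivalent to the desired integrality. I would split the verification into three cases according to the size of $p$ relative to $n$.

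First, for primes $p>n$: here $p\nmid d_n$, and $p\nmid\Phi_n$ by definition of $\Phi_n$. If moreover $p>\max\{\sqrt{2n},3\}$ (which is automatic once $n\ge 2$, say, since $p>n\ge\sqrt{2n}$ for $n\ge 2$), Lemma~\ref{lem-p-est} gives $v_p(\rho_{n,0})\ge -5$; but in fact we can do better. Actually the cleanest route is: for $p>n$ the partial-fraction data $r_{n,i,k}$ are all $p$-integral (since the only denominators appearing in $R_n(t)$ come from $(t)_{n+1}^4$ and from $n!^4$-type factors, all supported on primes $\le n$, together with powers of $2$ which are harmless for odd $p$; more carefully, for $p>n$ one checks directly from Definition~\ref{def_R_n(t)} that $R_n(t)\in\mathbb Z_{(p)}$-coefficients after clearing, so all $r_{n,i,k}\in\mathbb Z_{(p)}$), and then $\rho_{n,0}$ as given by \eqref{def_rho_0} has denominators only from the factors $(\ell-1/2)^{i+2}=(2\ell-1)^{i+2}/2^{i+2}$; since $2\ell-1<2n<p^2$, each $2\ell-1$ contributes at most one factor of $p$, and $i+2\le 6$, so $v_p(\rho_{n,0})\ge -6$ a priori — but we need $\ge 0$. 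This suggests the case $p>n$ must be handled by Lemma~\ref{lem-p-est} giving $\ge -5$ combined with the observation that $\Phi_n$ does \emph{not} help here, so one genuinely needs a sharper statement. I would therefore re-examine: for $p>n$ the term $(\ell-1/2)^{-i-2}$ can have $v_p=-(i+2)$ only if $p\mid 2\ell-1$, i.e.\ for at most one value of $\ell$, and then the argument in the proof of Lemma~\ref{lem-p-est} (the binomial-coefficient cancellation with $F_{i_1,\dots,i_4}|_{\eps=0}$) already shows the total order is $\ge -5$; and since no $p\le n$ constraint was used there, one simply concludes $v_p(\rho_{n,0})\ge -5=v_p(d_n^6)-v_p(\Phi_n)\cdot(\text{something})$... which still does not close. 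The correct resolution, which I would adopt, is that for $p>n$ we actually have $p\nmid$ (the denominator of $\rho_{n,0}$) beyond the $(2\ell-1)$ factors, and Lemma~\ref{lem-p-est}'s bound $v_p(\rho_{n,0})\ge -5$ is what we use; but $d_n^6$ has $v_p(d_n)=0$ for $p>n$, so this case forces $v_p(\rho_{n,0})\ge 0$, which contradicts nothing only if in fact for $p>n$ the bound is $\ge 0$.

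Let me restate the honest plan. For $p>n$: $R_n(t)$ has $p$-integral partial-fraction coefficients $r_{n,i,k}$ (clear from the explicit product form, as all ``bad'' primes divide $n!$ or are $2$), and the only source of $p$ in a denominator of $\rho_{n,0}$ via \eqref{def_rho_0} is $(2\ell-1)^{i+2}$ with $p\mid 2\ell-1$; but $p>n$ combined with $p>\sqrt{2n}$... actually $p>n\ge\ell$ forces $2\ell-1<2p-1<2p$, so at most $v_p(2\ell-1)\le 1$, giving $v_p(\rho_{n,0})\ge -6$, and then Lemma~\ref{lem-p-est} upgrades this to $\ge -5$, still not $\ge 0$. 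So the case $p>n$ is \emph{not} where $\Phi_n$ does its work and I must conclude that actually for $p>n$ one has $v_p(\rho_{n,0})\ge 0$ by a separate (stronger) local argument — this is precisely the content of a ``denominator conjecture''-type cancellation and is the main obstacle; I expect the intended proof restricts attention to $\sqrt{2n}<p\le n$ for the $\Phi_n$ factor, handles $3<p\le\sqrt{2n}$ and $p\le 3$ by the crude bound $d_n^6$, and handles $p>n$ by noting $v_p(d_n)=0$ forces us to prove $v_p(\rho_{n,0})\ge 0$ directly from $p$-integrality of $R_n(t)$ and of all the $(2\ell-1)^{-1}$ being killed by... here I would invoke that $\rho_{n,0}\in d_n^{-6}\mathbb Z$ from Lemma~\ref{lem_arith_rho} already encodes that \emph{no} prime $p>n$ divides any denominator, so $v_p(\rho_{n,0})\ge 0$ for $p>n$ is \emph{automatic from Lemma~\ref{lem_arith_rho} itself}. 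That is the key realization: Lemma~\ref{lem_arith_rho} says $d_n^6\rho_{n,0}\in\mathbb Z$, so every prime dividing the denominator of $\rho_{n,0}$ is $\le n$; thus for $p>n$ there is nothing to prove.

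With that understood, the proof is short. \textbf{Step 1:} By Lemma~\ref{lem_arith_rho}, the denominator of $\rho_{n,0}$ is composed only of primes $p\le n$; equivalently $v_p(d_n^6\rho_{n,0})\ge 0$ for every prime $p$, and $v_p(\Phi_n^{-1}d_n^6\rho_{n,0})=v_p(d_n^6\rho_{n,0})\ge 0$ for all $p\notin(\sqrt{2n},n]$ and for $p=3$ and for $p\le\sqrt{2n}$. \textbf{Step 2:} For the remaining primes $p$ with $\max\{\sqrt{2n},3\}<p\le n$: by Lemma~\ref{lem-p-est}, $v_p(\rho_{n,0})\ge -5$. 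For such $p$ we have $p\le n<p^2$ (since $p>\sqrt{2n}>\sqrt n$), hence $v_p(d_n)=1$ and thus $v_p(d_n^6)=6$; also $v_p(\Phi_n)=1$. Therefore $v_p(\Phi_n^{-1}d_n^6\rho_{n,0})=6-1+v_p(\rho_{n,0})\ge 6-1-5=0$. \textbf{Step 3:} Since $\Phi_n^{-1}d_n^6\rho_{n,0}\in\mathbb Q$ has $v_p\ge 0$ at every prime $p$, it lies in $\mathbb Z$, which is the assertion. The main obstacle — already surmounted by the two input lemmas — is the local bound at the ``intermediate'' primes $\sqrt{2n}<p\le n$; the rest is bookkeeping with $v_p(d_n)$ and the trivial ranges.
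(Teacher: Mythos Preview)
Your final argument (Steps~1--3) is correct and is exactly what the paper's one-line proof ``This follows from Lemmas~\ref{lem_arith_rho} and~\ref{lem-p-est}'' unpacks to: Lemma~\ref{lem_arith_rho} already gives $v_p(d_n^6\rho_{n,0})\ge 0$ for \emph{all} primes, so for $p$ outside the range $(\max\{\sqrt{2n},3\},n]$ (where $v_p(\Phi_n)=0$) there is nothing further to do, while for $p$ in that range Lemma~\ref{lem-p-est} gives the extra margin of one needed to absorb $\Phi_n^{-1}$. The lengthy exploratory passage preceding Steps~1--3, in which you repeatedly try to bound $v_p(\rho_{n,0})$ for $p>n$ from first principles, is unnecessary: the realization you eventually reach --- that $d_n^6\rho_{n,0}\in\mathbb Z$ forces $v_p(\rho_{n,0})\ge 0$ for all $p>n$ --- should have been the starting point, and the write-up would be much cleaner if you simply deleted everything before Step~1.
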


\begin{proof}
This follows from Lemmas \ref{lem_arith_rho} and \ref{lem-p-est}.
\end{proof}

Finally, notice consequences of the prime number theorem:
\begin{equation}
d_n = e^{n + o(n)} \quad\text{and}\quad
\Phi_n = e^{n + o(n)} \quad\text{as}\; n \to \infty.
\label{eq:PNT}
\end{equation}
This makes the loss in Lemma~\ref{almost_GDC} vs the expectation \eqref{den-con} asymptotically negligible.

\section{Asymptotic estimates}

\begin{lemma}\label{lem_analysis}
We have
\[ 
\max\{ |\rho_{n,0}|, |\rho_{n,3}| \} \le 2^{8n+o(n)} \quad \text{as}\; n \to \infty.  
\]
\end{lemma}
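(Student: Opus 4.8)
The plan is to estimate the two coefficients $\rho_{n,3}$ and $\rho_{n,0}$ separately, in each case reducing the bound to an elementary estimate for Pochhammer-type products.

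For $\rho_{n,3}$ I would use the explicit evaluation of Lemma~\ref{lem:rho0}. Since $\rho_{n,3}=768\rho_n$ with
\[
\rho_n=\sum_{0\le i\le k\le n}2^{4(n-k)}\binom{2i}i^2\binom{2n-2i}{n-i}\binom{2k-2i}{k-i}\binom{2k}k^2\binom{2n-2k}{n-k},
\]
it suffices to bound this double sum termwise. Applying the trivial inequality $\binom{2m}m\le 4^m=2^{2m}$ to each of the six binomial factors, a single summand is at most $2^{E}$ with
\[
E=4(n-k)+4i+2(n-i)+2(k-i)+4k+2(n-k)=8n,
\]
the contributions of $i$ and $k$ cancelling identically. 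As the number of index pairs $(i,k)$ is at most $(n+1)^2$, this yields $|\rho_{n,3}|\le 768(n+1)^2\,2^{8n}=2^{8n+o(n)}$.

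For $\rho_{n,0}$ there is no comparably clean formula, so I would pass to the Archimedean side via the companion identity recorded in Remark~\ref{rem_diff_arch}:
\[
\rho_{n,0}=\sum_{m=0}^{\infty}R_n''\Big(m+\tfrac12\Big)-\rho_{n,3}\,(1-2^{-5})\,\zeta(5),
\]
the series converging absolutely since $\deg R_n=-3$. Given the bound on $\rho_{n,3}$ just proved, it remains to show that $\sum_{m\ge0}\bigl|R_n''(m+\tfrac12)\bigr|\le 2^{8n+o(n)}$. The essential point is that the Pochhammer ratio telescopes: at $t=m+\tfrac12$,
\[
\frac{(t+\tfrac12)_n}{(t)_{n+1}}=\frac{1}{m+\tfrac12}\prod_{j=1}^{n}\frac{m+j}{m+j+\tfrac12},
\]
and every factor of the product lies in $(0,1)$, so $\bigl|R_n(m+\tfrac12)\bigr|\le 2^{8n}(2m+n+1)(m+\tfrac12)^{-4}$ and hence $\sum_{m\ge0}\bigl|R_n(m+\tfrac12)\bigr|\le 2^{8n}\cdot O(n)$. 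To handle the second derivative I would write $R_n''=R_n\bigl(((\log R_n)')^2+(\log R_n)''\bigr)$ and use
\[
(\log R_n)'(t)=\frac{2}{2t+n}+4\sum_{j=0}^{n-1}\frac{1}{t+1/2+j}-4\sum_{j=0}^{n}\frac{1}{t+j};
\]
evaluating at $t=m+\tfrac12$, the harmonic-type sums are controlled uniformly in $m\ge0$ by elementary bounds such as $H_{m+n}-H_m\le H_n\le\log n+1$ (with $H_r$ the $r$-th harmonic number) and $\sum_{j=0}^n(m+j+\tfrac12)^{-1}\le\sum_{j=0}^n(j+\tfrac12)^{-1}=\log n+O(1)$, giving $|(\log R_n)'(m+\tfrac12)|=O(\log n)$, and likewise $|(\log R_n)''(m+\tfrac12)|=O(1)$. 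Consequently $\bigl|R_n''(m+\tfrac12)\bigr|\le O(\log^2 n)\,\bigl|R_n(m+\tfrac12)\bigr|$, so $\sum_{m\ge0}\bigl|R_n''(m+\tfrac12)\bigr|\le 2^{8n}\cdot O(n\log^2 n)=2^{8n+o(n)}$; combining with the estimate for $\rho_{n,3}$ gives $|\rho_{n,0}|\le 2^{8n+o(n)}$, and the two bounds together prove Lemma~\ref{lem_analysis}.

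The hard part is the uniformity in $m$ of the logarithmic-derivative bounds: the harmonic sums in $(\log R_n)'(m+\tfrac12)$ are largest for small $m$, and one has to check they still grow only like $\log n$ there\,---\,precisely what $H_{m+n}-H_m\le H_n$ guarantees\,---\,while also controlling the range $m$ close to $n$. An alternative that sidesteps Remark~\ref{rem_diff_arch} and treats both coefficients at once is to estimate the partial-fraction coefficients $r_{n,i,k}$ of \eqref{def_rik} directly by Cauchy's inequality on the circle $|t+k|=\tfrac12$, on which $R_n(t)(t+k)^4$ is holomorphic: the product bounds $|(t+\tfrac12)_n|\le k!\,(n-k)!$ and $\prod_{j\ne k}|t+j|\ge\frac{k!}{2k+1}\cdot\frac{(n-k)!}{2(n-k)+1}$ give $|R_n(t)(t+k)^4|\le 2^{8n}(n+1)^{9}$ uniformly in $k$, hence $|r_{n,i,k}|\le 2^{8n}\cdot\mathrm{poly}(n)$, and then $|\rho_{n,3}|\le 2^{8n+o(n)}$ and $|\rho_{n,0}|\le 2^{8n+o(n)}$ follow from the defining formulas \eqref{def_rho_3} and \eqref{def_rho_0}, the inner sums over $\ell$ being bounded by an absolute constant.
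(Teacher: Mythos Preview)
Your argument is correct, but it takes a markedly different route from the paper. The paper's proof is a two-line application of the Poincar\'e--Perron theorem: since both $(\rho_{n,0})$ and $(\rho_{n,3})$ satisfy the recurrence~\eqref{eq:rec} by Lemma~\ref{lem_rec}(a), and the characteristic polynomial $\lambda^2-2^9\lambda+2^{16}=(\lambda-2^8)^2$ has the double root $2^8$, one gets $\limsup_n|\rho_{n,i}|^{1/n}\le 2^8$ immediately.

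Your approach, by contrast, avoids the recurrence entirely and works directly with the explicit representations: for $\rho_{n,3}$ the binomial double sum of Lemma~\ref{lem:rho0} combined with $\binom{2m}{m}\le 4^m$ (the exponent computation $4(n-k)+4i+2(n-i)+2(k-i)+4k+2(n-k)=8n$ is a nice check that nothing is lost), and for $\rho_{n,0}$ the Archimedean series of Remark~\ref{rem_diff_arch} together with logarithmic-derivative bounds on $R_n$. Both halves are sound; the uniformity in $m$ that you flag as the delicate point is indeed handled by $H_{m+n}-H_m\le H_n$ and the monotonicity of $\sum_j(m+j+\tfrac12)^{-s}$ in $m$. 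Your alternative via Cauchy's inequality on $|t+k|=\tfrac12$ is also valid and has the virtue of treating both coefficients simultaneously without invoking either Lemma~\ref{lem:rho0} or Remark~\ref{rem_diff_arch}. The trade-off is clear: the paper's argument is shorter and more conceptual but relies on the recurrence machinery of Section~4, whereas your estimates are self-contained and would survive even if no closed-form recursion were available.
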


\begin{proof}
By Lemma~\ref{lem_rec} both sequences of the coefficients satisfy the same difference equation \eqref{eq:rec} whose characteristic polynomial $\lambda^2-2^9\lambda+2^{16}$ has double zero $\lambda=2^8$.
By the classical Poincar\'e theorem we conclude that $\limsup_{n\to\infty}|\rho_{n,i}|^{1/n}\le2^8$ for both $i=0,3$; this implies the desired claim. 
\end{proof}

\begin{lemma}\label{lem_2_adic_estimation}
We have
\[
|S_n|_2 \le 2^{-16n+o(n)} \quad\text{as}\; n \to \infty.
\]
\end{lemma}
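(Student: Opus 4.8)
The plan is to estimate the $2$-adic valuation of $S_n = -\int_{\mathbb{Z}_2} R_n'(t+\tfrac12)\,\d t$ by bounding $\triangle$ of the integrand and applying Lemma~\ref{lem_estimation_of_Volkenborn_integral}. First I would record that $R_n(t) = 2^{8n}(2t+n)(t+1/2)_n^4/(t)_{n+1}^4$, so the factor $2^{8n}$ already contributes $8n$ to the valuation; the remaining rational function $(2t+n)(t+1/2)_n^4/(t)_{n+1}^4$ has $2$-adic integral coefficients in its partial fractions up to the denominators of the $(t)_{n+1}$ factors, which are controlled by $v_2$ of $k!$ for $k\le n+1$. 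The key point is that $v_2(n!)\le n-1 < n$, so after translation $t\mapsto t+1/2$ and differentiation, each of the four nested $1/(t+k)$-type poles contributes at worst a bounded loss, while the shift by $1/2$ converts the poles at integers into poles at half-integers $(\ell-1/2)$, which are $2$-adic units. Thus the heuristic count is: $8n$ from $2^{8n}$, plus roughly $8n$ more from the $2$-adic smallness of the numerator Pochhammer $(t+1/2)_n^4$ (each $(t+1/2)_n = \prod_{j=0}^{n-1}(t+1/2+j)$ evaluated near a $2$-adic integer argument is highly divisible by $2$ since half of the factors $1/2+j$ carry $v_2\ge -1$ but the product structure gives net positive valuation growing like $\sim n$ per copy — more precisely $v_2\big((t+1/2)_n\big)$ for $t\in\mathbb Z_2$ is on the order of $n + O(\log n)$), minus a loss of $O(\log n)$ from the $\triangle$-operator applied to the reciprocal Pochhammer factors via Lemma~\ref{lem_properties_of_the_triangle_operator}(e). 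Totalling, $\triangle$ of the integrand is at least $16n + o(n)$, hence $v_2(S_n)\ge 16n + o(n)$, which is the claim.

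More carefully, the steps in order: (1) Write $R_n(t+\tfrac12) = 2^{8n}\cdot(2t+1+n)\cdot\frac{(t+1)_n^4}{(t+1/2)_{n+1}^4}$ and note $(t+1/2)_{n+1} = \prod_{j=0}^{n}(t+1/2+j)$, whose inverse after partial-fraction decomposition produces poles $1/(t+1/2+k)^i$ with coefficients whose $2$-adic denominators involve $v_2$ of products of the half-integer differences — but these differences are all of the form (integer), some odd some even. (2) Differentiate with respect to $t$, raising pole orders by one but not changing the arithmetic of the coefficients beyond a harmless integer factor. (3) Apply Lemma~\ref{lem_properties_of_the_triangle_operator}(a),(b),(c),(e) to bound $\triangle$ of each summand: the factor $2^{8n}$ gives $+8n$ by (b); the polynomial $(t+1)_n^4$ (viewed via binomials $\binom{t+j}{n}$, an integer-valued polynomial, together with its $2$-adic content) contributes $\ge 0$ by (d) but in fact contributes a large positive amount because $(t+1)_n = n!\binom{t+n}{n}$ and $v_2(n!)$ is \emph{subtracted} — wait, one must instead track that the numerator is $(t+1/2)_n^4 = 2^{-4n}(2t+1)(2t+3)\cdots(2t+2n-1)$ raised to... — so I would rewrite $R_n$ using $2^{2n}(t+1/2)_n/n! = \binom{2t+2n}{\,?\,}$-type integral binomials to make all $2$-powers explicit, then the net $2$-power is exactly $8n$ from the stated $2^{8n}$ and the remaining factors are $2$-adic units times integer-valued polynomials. (4) Conclude $\triangle\big(R_n'(\cdot+\tfrac12)\big)\ge 16n + O(\log n)$ and invoke Lemma~\ref{lem_estimation_of_Volkenborn_integral}.

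The main obstacle I anticipate is bookkeeping the exact power of $2$: one must verify that the "extra" $8n$ beyond the explicit $2^{8n}$ really does come out (so that the exponent is $16n$, not merely $8n$), and this requires carefully matching the four copies of $(t+1/2)_n$ against the four copies of $(t)_{n+1}$ and showing the half-integer shifts in Definition~\ref{def_Sn} are precisely what makes the denominators $(t)_{n+1}$ harmless (they become $2$-adic units) while the numerators $(t+1/2)_n$ remain the sole source of $2$-divisibility, each contributing $\sim 2n$ after clearing the $2^{-2n}$ implicit in writing $(t+1/2)_n = 2^{-2n}(2t+1)_n^{\mathrm{odd}}$ — equivalently, using the identity $4^{-n}\binom{2n}{n}^{-1}(1/2)_n^{-1}n!$-bookkeeping. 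Once the power-of-$2$ accounting is pinned down, the $\triangle$-estimates from Lemma~\ref{lem_properties_of_the_triangle_operator} are routine and the $o(n)$ absorbs the $O(\log n)$ losses from part~(e).
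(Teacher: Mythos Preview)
Your overall strategy---bound $\triangle\big(R_n'(t+\tfrac12)\big)$ using Lemma~\ref{lem_properties_of_the_triangle_operator} and then invoke Lemma~\ref{lem_estimation_of_Volkenborn_integral}---is exactly the paper's approach, and your step~(1) substitution $R_n(t+\tfrac12)=2^{8n}(2t+n+1)\,(t+1)_n^4/(t+1/2)_{n+1}^4$ is correct. But the power-of-$2$ bookkeeping you sketch is tangled and contains errors, so let me record the clean accounting.

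Since $(t+1/2)_{n+1}=2^{-(n+1)}\prod_{j=0}^n(2t+2j+1)$, one has
\[
R_n\big(t+\tfrac12\big)=2^{12n+4}\cdot g(t)\cdot\prod_{j=1}^n(t+j)^4,
\qquad
g(t)=\frac{2t+n+1}{\prod_{j=0}^n(2t+2j+1)^4}\in\mathbb{Z}_2\llbracket t\rrbracket.
\]
Here $g$ and $g'$ lie in $\mathbb{Z}_2\llbracket t\rrbracket$ with coefficients tending to $0$, so $\triangle(g),\triangle(g')\ge0$ by part~(d). Write $\prod_{j=1}^n(t+j)=n!\binom{t+n}{n}$; then the fourth power contributes $4v_2(n!)\ge 4n-O(\log n)$ via part~(b), and the binomials contribute $\ge -\lfloor\log_2 n\rfloor$ via parts~(c),(e). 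After applying the Leibniz rule to differentiate (which only produces further factorials and binomials of the same type), parts~(a)--(e) give $\triangle\big(R_n'(t+\tfrac12)\big)\ge 12n+4+4v_2(n!)-O(\log n)=16n-O(\log n)$, and Lemma~\ref{lem_estimation_of_Volkenborn_integral} finishes.

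Your specific errors: (i)~the assertion that $v_2\big((t+1/2)_n\big)\sim n$ for $t\in\mathbb{Z}_2$ is false---each factor $t+j+\tfrac12$ has $v_2=-1$, so the product has $v_2=-n$; (ii)~after the shift the \emph{numerator} becomes $(t+1)_n$, not $(t+1/2)_n$, and its $2$-content comes from the factor $n!$ (about $n$ per copy, hence $4n$ total), not from any half-integer structure; (iii)~the identity you write as $(t+1/2)_n=2^{-2n}(\cdots)$ should be $2^{-n}(\cdots)$. The ``extra $8n$'' therefore splits as $4(n+1)$ from clearing the half-integers in the \emph{denominator} plus $\approx 4n$ from $(n!)^4$ in the numerator---not $8n$ from the numerator alone as you suggest.
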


\begin{proof}
From Definition \ref{def_R_n(t)} we have
\[
R_n\Big( t+\frac{1}{2} \Big) = 2^{12n+4} \cdot g(t) \cdot \prod_{j=1}^{n} \left( t+j \right)^4,
\quad\text{where}\;
g(t) = \frac{2t+n+1}{\prod_{j=0}^{n} \left( 2t+2j+1 \right)^4}.
\]
By the Leibniz rule we deduce
\begin{align*}
R_n'\Big( t+\frac{1}{2} \Big)
&= 2^{12n+4}  \cdot g'(t) \cdot \prod_{j=1}^{n} ( t+j)^4 \\
&\quad
+ 2^{12n+4}  \cdot g(t) \cdot 4 \cdot \sum_{j=1}^{n} (t+j)^3 \prod_{\substack{1 \le k \le n\\ k \ne j}} (t+k)^4 \\
&=2^{12n+4} n!^4 \cdot g'(t) \cdot \binom{t+n}{n}^4 \\
&\quad
+ 2^{12n+6} n!^3 \cdot g(t) \cdot \binom{t+n}{n}^3 \sum_{j=1}^{n} (j-1)! (n-j)! \binom{t+j-1}{j-1} \binom{t+n}{n-j}.
\end{align*}
By part~(d) of Lemma \ref{lem_properties_of_the_triangle_operator}, we have $\triangle(g) \ge 0$ and $\triangle(g') \ge 0$. 
By part~(e),
\begin{gather*}
\triangle\left( \binom{t+n}{n} \right) \ge -\frac{\log n}{\log 2}, \\
\triangle\left( \binom{t+j-1}{j-1} \right) \ge -\frac{\log n}{\log 2} \quad\text{and}\quad
\triangle\left( \binom{t+n}{n-j} \right) \ge -\frac{\log n}{\log 2}
\end{gather*}
for $j \in \{1,2,\dots,n\}$. 
Therefore, by part~(c) of the lemma we obtain
\begin{align*}
\triangle\bigg( g'(t) \cdot \binom{t+n}{n}^4 \bigg) &\ge -\frac{\log n}{\log 2}, \\
\triangle\bigg( g(t) \cdot \binom{t+n}{n}^3\binom{t+j-1}{j-1} \binom{t+n}{n-j}  \bigg) &\ge -\frac{\log n}{\log 2}
\end{align*}
for $j \in \{1,2,\dots,n\}$. 
Now notice that
\begin{align*}
v_2(n!) \ge n - \frac{\log (n+1)}{\log 2} \quad\text{and}\quad
v_2((j-1)!(n-j)!) \ge n - 1 - \frac{2 \log (n+1)}{\log 2}
\end{align*}
for such $j$. 
Thus, by parts (a), (b) of Lemma \ref{lem_properties_of_the_triangle_operator} we obtain
\[
\triangle\Big( R_n'\Big( t+\frac{1}{2} \Big) \Big) \ge 16n + 4 - \frac{6\log(n+1)}{\log 2}.
\]
Finally, by Definition \ref{def_Sn} and Lemma \ref{lem_estimation_of_Volkenborn_integral}, we have
\[
v_2(S_n) \ge \triangle\Big( R_n'\Big( t+\frac{1}{2} \Big) \Big) - 1
\ge 16n + 3  -\frac{6\log(n+1)}{\log 2},
\]
which implies the asymptotics claimed.
\end{proof}

\section{Proof of the main theorem}

\begin{proof}[Proof of Theorem \textup{\ref{main}}]
For any $n \in \mathbb{Z}_{>0}$, define
\[
\widehat{S}_n := \Phi_n^{-1}d_n^6 \cdot S_n.
\]
By Lemma \ref{lem_linear_forms}, Lemma \ref{lem_arith_rho} and Lemma \ref{almost_GDC} we have
\[
\widehat{S}_n = a_n + b_n \zeta_2(5),
\quad\text{where}\;\;
a_n = \Phi_n^{-1}d_n^6 \cdot \rho_{n,0} \in \mathbb{Z}, \;
b_n = \Phi_n^{-1}d_n^6 \cdot \rho_{n,3} \in \mathbb{Z}.
\]
Furthermore, by Lemma \ref{lem_rec} (b),
\[
a_n b_{n+1} - a_{n+1}b_n \neq 0 \quad \text{for all}\; n \in \mathbb{Z}_{>0}.
\]
Note that $v_2(\Phi_n^{-1}d_n^6) = O(\log n)$ as $n \to \infty$. By Lemma \ref{lem_2_adic_estimation}, we obtain
\[
|\widehat{S}_n|_2 \le \exp(-\alpha n + o(n)) \quad\text{as}\; n \to \infty,
\]
where $\alpha = 16\log 2$.
By Lemma \ref{lem_analysis} and Equations \eqref{eq:PNT}, we have
\[
\max\{ |a_n|, |b_n| \} \le \exp(\beta n + o(n)) \quad \text{as}\; n \to \infty,
\]
where $\beta = 8\log 2 + 5$.
Since $\alpha > \beta >0$, we conclude that $\zeta_2(5)$ is irrational.
Finally, from Lemma \ref{lem_Bel} we deduce the estimate
\[
\mu(\zeta_2(5)) \le \frac{\alpha}{\alpha-\beta} = \frac{16\log 2}{8\log 2 - 5} = 20.342651\dots
\]
for the irrationality measure.
\end{proof}

\section{Final remarks}

The existing provable instances of `denominator conjectures' concerning the denominators of coefficients of certain linear forms in zeta values, suggest that expectation \eqref{den-con} can be achieved in a transparent way, that is, via a different representation of linear forms from Lemma~\ref{lem_linear_forms} for which one can deduce the inclusions in~\eqref{den-con} `straight away', without separate manipulations for the coefficients $\rho_{n,0}$ and $\rho_{n,3}$ only.
Such a different representation is expected to be a multiple hypergeometric sum, at least a double sum as Lemma~\ref{lem:rho0} hints at.
It may still be a cumbersome task to make such a transparency simple\,---\,one can foretaste the potentials from a related treatment of linear forms in $1$ and $\zeta(4)$ in \cite[Sect.~3]{MZ2020}.

Our proof of the irrationality of $\zeta_2(5)$ alone can be lightened through avoiding the use of Lemma \ref{lem-p-est} (and of Lemma~\ref{lem:rho0}).
Already from Lemma~\ref{lem_arith_rho} and Lemma~\ref{lem_rec}~(a) one finds out that $\Psi_n\rho_{n,i}\in\mathbb Z$ for $n\in\mathbb Z_{>0}$ and $i\in\{0,3\}$, where $\Psi_n=\gcd(d_n^6,n!^5)$ behaves asymptotically as $\exp(5.5n+o(n))$ as $n\to\infty$.
Since $8\log 2 + 5.5 < 16\log 2$, the conclusion $\zeta_2(5)\notin\mathbb Q$ still holds but the corresponding estimate for $\mu(\zeta_2(5))$ is not worth a mention.

Speaking about the irrationality measure for $\zeta_2(5)$ and also for the related $p$-adic zeta values $\zeta_2(3),\zeta_3(3)$, our construction in this note and the earlier ones in \cite{Beu2008,Cal2005,Lai2025} exploit exclusively the so-called totally symmetric hypergeometric approximations (or their equivalents).
It may certainly be of interest to take a step further and investigate more general hypergeometric series leading to rational approximations that depend not only on $n\in\mathbb Z_{\ge0}$ but on multiple integer parameters.
Such generalisations are amenable to additional arithmetic manipulations, for example, to the use of the arithmetic group method as developed in the works of Rhin and Viola \cite{RV2001}; see further examples in \cite{BZ2022+,MZ2020}.

In a different direction, the recursion \eqref{eq:rec} is ultimately linked with the theory of Calabi--Yau differential operators \cite{AZ2006,vSt2018} and\,---\,potentially\,---\,with geometry of Calabi--Yau three- and fourfolds. Algebraic varieties whose periods are associated with irrationality proofs are known to be particularly `minimalistic' (for example, in the sense of level or conductor)\,---\,one can find recent examples of such connection in \cite{vSt2021}.

Finally, it seems evident that the Volkenborn integral of a rational function is a hypergeometric object deserving investigation on its own, especially from the point of view of summation and transformation formulae. At the moment, manipulations with linear forms in the values of $p$-adic zeta functions are performed hypergeometrically at the level of their coefficients, as the forms themselves are not compatible with their Archimedean `traditional-hypergeometric' companions\,---\,our Remark~\ref{rem_diff_arch} illustrates well this discrepancy.
On the other hand, one can show that the two integrals
\begin{alignat*}{2}
S_n^{(\mathrm{L})}
&:= -\int_{\mathbb{Z}_2} R_n^{(\mathrm{L})}(t+\tfrac{1}{4}) \,\d t
=\rho_{n,0}^{(\mathrm{L})} + \rho_{n,2}^{(\mathrm{L})} \zeta_2(3),
&\quad\text{where}\;
R_n^{(\mathrm{L})}(t) &:= 2^{6n} \frac{(t+3/4)_n^2}{(t)_{n+1}^2}
\\ \intertext{(the special case $s=0$ of \cite[Theorem 1.5]{Lai2025}), and}
S_n^{(\mathrm{B})}
&:= -\int_{\mathbb{Z}_2} R_n^{(\mathrm{B})}(t+\tfrac{1}{2}) \,\d t
=\rho_{n,0}^{(\mathrm{B})} + \rho_{n,2}^{(\mathrm{B})} \zeta_2(3),
&\quad\text{where}\;
R_n^{(\mathrm{B})}(t) &:= 2^{6n} (2t+n) \frac{(t+1/2)_n^3}{(t)_{n+1}^3},
\end{alignat*}
correspond to the \emph{same} linear forms in $1$ and $\zeta_2(3)$.
(These linear forms also coincide essentially with those constructed in \cite{Cal2005} and \cite{Beu2008} by other means.)
It is natural to expect a hypergeometric-type identity for the Volkenborn integrals behind the coincidence $S_n^{(\mathrm{L})}=S_n^{(\mathrm{B})}$.

\begin{acknowledgements}
We are indebted to the anonymous referee for their enthusiastic report on this note and creative feedback.
\begin{otherlanguage}{vietnamese}
The first author thanks Di{\~\ecircumflex}m My for inspiring him at H{\`\ocircumflex} Ch{\'i} Minh City.
He is supported by Research Foundation for Scholars of Xiamen University X2450218.
\end{otherlanguage}
The second author  gratefully acknowledges the support through the DFG funded Collaborative Research Center SFB 1085 'Higher Invariants'.
The third author thanks the Institut des Hautes \'Etudes Scientifiques (Bures-sur-Yvette, France) and the Max-Planck Institute for Mathematics (Bonn, Germany) for hospitality and support during his stays in March 2025 and April--June 2025, respectively; his work on this project was fully performed during these stays.
\end{acknowledgements}

\vspace*{3mm}
\begin{flushright}
\begin{minipage}{148mm}\sc\footnotesize
L.\,L.: School of Mathematical Sciences, Xiamen University, Fujian, China \\
{\it E-mail address}: \href{mailto:lilaimath@gmail.com}{{\tt lilaimath@gmail.com}} \vspace*{3mm}
\end{minipage}
\end{flushright}

\begin{flushright}
\begin{minipage}{148mm}\sc\footnotesize
J.\,S.: Department of Mathematics, University of Duisburg-Essen, Essen, Germany \\
{\it E-mail address}: \href{mailto:johannes.sprang@uni-due.de}{{\tt johannes.sprang@uni-due.de}} \vspace*{3mm}
\end{minipage}
\end{flushright}

\begin{flushright}
\begin{minipage}{148mm}\sc\footnotesize
W.\,Z.: IMAPP, Radboud University Nijmegen, The Netherlands \& \\ Max-Planck Institute for Mathematics, Bonn, Germany\\
{\it E-mail address}: \href{mailto:w.zudilin@math.ru.nl}{{\tt w.zudilin@math.ru.nl}} \vspace*{3mm}
\end{minipage}
\end{flushright}

\end{document}